\newtheorem{thm}{Theorem}
\newtheorem*{remark}{Remark}
\DeclareMathOperator*{\argmin}{\arg\!\min}
\def\e{\epsilon}
\def\s{\sigma}
\def\k{\kappa}
\newtheorem{prop}{Proposition}
\title{Exponentiated Subgradient Algorithm for Online Optimization under the Random Permutation Model}
\author[1]{Reza Eghbali\thanks{eghbali@uw.edu}}
\author[2]{Jon Swenson}
\author[1]{Maryam Fazel}
\affil[1]{Department of Electrical Engineering, University of Washington, Seattle}
\affil[2]{Department of Mathematics, University of Washington, Seattle}
\newlength\yearposx
\begin{document}

\maketitle
\begin{abstract}
Online optimization problems arise in many resource allocation tasks, where the future demands for each resource and the associated utility functions change over time and are not known apriori, yet resources need to be allocated at every point in time despite the future uncertainty. 
In this paper, we consider online optimization problems with general concave utilities. We modify and extend an online optimization algorithm proposed by Devanur et al. for linear programming to this general setting. The model we use for the arrival of the utilities and demands is known as the random permutation model, where a fixed collection of utilities and demands are presented to the algorithm in random order. We prove that under this model the algorithm achieves a competitive ratio of $1-O(\epsilon)$ under a near-optimal assumption that the bid to budget ratio is $O \left(\frac{\epsilon^2}{\log\left({m}/{\epsilon}\right)}\right)$,  where 
$m$ is the number of resources, while enjoying a significantly lower computational cost than the optimal algorithm proposed by  Kesselheim et al. We draw a connection between the proposed algorithm and subgradient methods used in convex optimization. In addition, we present numerical experiments that demonstrate the performance and speed of this algorithm in comparison to existing algorithms.
\end{abstract}
\section{Introduction}
Online optimization provides a framework for real-time optimization problems that arise in applications such as Internet advertising, online resource allocation, and online auctions. In these problems, functions of new variables are added to the objective, and constraints are modified over time. An online optimization algorithm should assign a value to each newly introduced variable without any knowledge of future changes in the objective function. Linear programs have received considerable attention in the context of online optimization (\cite{agrawal2009dynamic}, \cite{feldman2010online}, \cite{devanur2011near},\cite{jaillet2012near},\cite{kesselheimRTV13}). In this paper, we study a more general online convex optimization problem that contains \emph{online linear programming} (online LP) as a special case.  The performance of an online algorithm for any instance of the problem can be evaluated by finding the ratio $P/P^*$, where $P$ is the objective value achieved by the algorithm and $P^*$ is the optimal value of the corresponding offline optimization problem. The competitive ratio of an online maximization algorithm under the worst-case model is the infimum of $P/P^*$ over all the possible instances. While for many special online problems algorithms have been proposed that achieve a non-trivial competitive ratio under the worst-case model, the competitive ratio of any online LP algorithm tends to zero as the number of functions grows \cite{babaioff2008online}. To be able to derive non-trivial bounds on the competitive ratio of an online algorithm for any problem that contains online LP, one often considers the competitive ratio under more restrictive models such as the random permutation model or the i.i.d. model. 
The results of this paper are presented for the random permutation model.
In section \ref{form}, we formulate the problem and formally define the competitive ratio of an online algorithm under various models. 

\subsection{Problem formulation}\label{form}
 Consider the problem

\begin{align}\tag{$\mathcal{P}0$}\label{vector_general_i}
\underset{x_1,\ldots,x_n\in \mathbb{R}^k}{\mbox{maximize}} & {\quad \sum_{t=1}^{n} {f_t(x_t)}+\psi\left(\sum_{t=1}^{n}{A_t x_t }\right)},
\end{align}

\noindent where for all $t \in [n] = \{1,2,\ldots,n\}$, $A_t \in \mathbb{R}_{+}^{m\times k}$, $f_t \in \mathcal{G}$,  with $\mathcal{G}$ a subset of proper concave functions mapping $\mathbb{R}^k$ to $\left[-\infty,\, +\infty\right)$. More technical assumptions on the functions in $\mathcal{G}$ will be specified in section \ref{general}. Here $\psi(u)=I_{u \leq b}$ for some $b \in \mathbb{R}^m$, where $I$ denotes the indicator function of a  set, defined as follows for any $U \subset \mathbb{R}^m$:

\begin{align*}
I_{u \in U} = \left\{\begin{array}{cc} 0 & u \in U,\\ -\infty &  u \notin U.\end{array}\right.
\end{align*}

Often in applications, the functions $f_t$ are utility functions, i.e., a decision $x_t$ provides the utility $f_t(x_t)$, and the function $\psi=I_{u \leq b}$  can be interpreted as imposing a total \emph{budget} given by $b\in \mathbb{R}^m$. In an online optimization problem, for all $t \in [n]$, a vector should be assigned to $x_t$ that can depend on all the information obtained until time $t$, i.e., $\left(f_s,A_{s}\right)$, $1\leq s \leq t$, but not on the future information $\left(f_s,A_{s}\right)$, $t+1\leq s \leq n$.  
A procedure that performs this task is called an \emph{online optimization algorithm}. The input to the algorithm, $\left(\left(f_1,A_{1}\right),\left(f_2,A_{2}\right),\ldots,\left(f_n,A_{n}\right)\right)$, is a random vector taking values in $\mathcal{H}$, where $\mathcal{H} \subset  \left(\mathcal{G}\times \mathbb{R}_{+}^{m\times k}\right)^n $.  To avoid issues with measurability, we assume that this random vector takes values in a countable subset of $\mathcal{H}$. Let $\mathcal{F}$ be the set of probability distributions corresponding to such random vectors. To evaluate the performance of an online optimization algorithm, one can define the \emph{competitive ratio}. Let ${x}_t$, $1\leq t \leq n$ be the solution given by an online algorithm and define $P = \sum_{t=1}^{n} {f_t(x_t)}+\psi\left(\sum_{t=1}^{n}{A_t x_t }\right)$.  Let $P^*$ be the optimal value of \eqref{vector_general_i}. Assume that $P^* \in (0 ,\,+\infty)$ for all the members of $\mathcal{H}$. The competitive ratio over $\mathcal{D} \subset \mathcal{F}$ is defined as:

\begin{align*}
C_{\mathcal{D}} = \inf\left\{\left.E\left[\frac{P}{P^*}\right]\right\lvert {\left(\left(f_1,A_{1}\right),\left(f_2,A_{2}\right),\ldots,\left(f_n,A_{n}\right)\right)}\sim g \in \mathcal{D} \right\}.
\end{align*}
Three important choices for $\mathcal{D}$ considered in the literature for special cases of problem \eqref{vector_general_i} are as follows: 
\begin{enumerate}
\item Worst-case: $\mathcal{D}_1 = \mathcal{F}$. In this case, $C_{\mathcal{D}_1}$ simplifies to:

\begin{align*}
C_{\mathcal{D}_1} = \inf\left\{\left.\frac{P}{P^*}\right\lvert \left(\left(f_1,A_{1}\right),\left(f_2,A_{2}\right),\ldots,\left(f_n,A_{n}\right)\right) \in \mathcal{H} \right\}.
\end{align*}

Note that if the online algorithm assigns a random value to $P$ (the algorithm is randomized), $P$ should be replaced by $E[P]$.
\item Random permutation: $\mathcal{D}_2$ is the subset of $\mathcal{F}$ corresponding to exchangeable random vectors, i.e., distribution functions that are invariant under permutation. 
Let $\Pi$ be the set of all permutations over $\left[n\right]$. Let $\sigma$ be a random permutation uniformly distributed on $\Pi$. The definition of $C_{\mathcal{D}_2}$ simplifies to:

\begin{align*}
C_{\mathcal{D}_2} = \inf\left\{\left.\frac{E \left[P\right]}{P^*}\right\lvert \left(\left(f_1,A_{1}\right),\left(f_2,A_{2}\right),\ldots,\left(f_n,A_{n}\right)\right) =  \left(Y_{\s(1)},Y_{\s(2)},\ldots,Y_{\s(n)}\right), \, \left(Y_1,Y_2\ldots,Y_n\right) \in \mathcal{H}  \right\}.
\end{align*}

\item i.i.d.: $\mathcal{D}_3$ contains all $g \in \mathcal{F}$ induced by random vectors $\left(\left(f_1,A_{1}\right),\left(f_2,A_{2}\right),\ldots,\left(f_n,A_{n}\right)\right)$ such that $\left(f_t,A_t\right)$, $1\leq t\leq n$ are independent and identically distributed.

\end{enumerate}

Note that since $\mathcal{D}_3 \subset \mathcal{D}_2 \subset \mathcal{D}_1$,  $C_{\mathcal{D}_3}\geq C_{\mathcal{D}_2}\geq C_{\mathcal{D}_1}$.

\subsection{Related work}
 
Most literature in online optimization has focused on a special case of \eqref{vector_general_i} in which for all $t\in[n]$, the domain of $f_t$  is the simplex 
$\{x \in [0,1]^k \mid \mathbf{1}^{\top} x \leq 1\}$ and $f_t(x) = c_t^T x$ for $x \in {\rm dom} f_t$.  Following \cite{agrawal2009dynamic}, we refer to this problem as the online LP problem, expressed as

\begin{align}\label{LP}
\begin{array}{ll}
\mbox{maximize}&{    \sum_{t=1}^{n} c_t^T x_t}\\ 
\mbox{subject to}& \sum_{t=1}^{n}{A_t x_t } \leq b\\
& \mathbf{1}^{\top} x_t \leq 1, x_t \geq 0 \;\; t\in [n].
\end{array}
\end{align}
Note that $C_{\mathcal{D}_1}$ for any online LP algorithm goes to zero when $n$ grows as shown by an example in \cite{babaioff2008online}, while for the same example under the random permutation model the optimal algorithm achieves a competitive ratio that tends to $1/e$ as $n$ goes to infinity  \cite{babaioff2008online}. In light of this fact, much attention has been dedicated to the random permutation and i.i.d. models for online LP. Here we review the related results for online LP.  In section~\ref{Apps}, we briefly review the special cases and the applications of combinatorial online optimization problems, with convex relaxations that can be expressed as online linear programs.

An important parameter that appears in the analysis of many online LP algorithms is $\gamma = \max_{t,i,j}{\frac{(A_{t})_{i,j}}{b_i}}$ which is called the \emph{bid-to-budget} ratio. For an online algorithm to achieve a competitive ratio of $1 - \e$ under random permutation or i.i.d. models, it is necessary that $\gamma = O\left( \frac{\e^2}{\log{m}}\right)$  when $m \geq 2$ (\cite{agrawal2009dynamic}, \cite{devanur2011near}). Feldman et al.\ \cite{feldman2010online} presented an algorithm with $C_{\mathcal{D}_2} \geq 1 - \e$ when $\gamma = O\left( \frac{\e^3}{m\log (nk)} \right)$ and  $\gamma' = O\left (\frac{\e}{\log(nk)} \right)$,  where $\gamma' := \frac{\max_{t,i} c_{t,i}}{P^*}$.  Agrawal et al.\ \cite{agrawal2009dynamic} proposed an online LP algorithm called \emph{dynamic learning algorithm (DLA)} with $C_{\mathcal{D}_2} = 1-O(\e)$ when $\gamma =O\left( \frac{\e^2}{m \log\left(nk/\e\right)}\right)$. DLA solves $\log_2(1/\e)$ linear programs to estimate the optimal dual variables associated with the $m$ constraints.  The primal variable is then assigned using the estimated dual variables. The dependency that exists on $n$ in their results is introduced by a union bound over the space of dual variables. Molinaro and Ravi \cite{molinaro2013geometry} improved this union bound and proved that a modified version of DLA achieves a competitive ratio of $1-O\left(\e\right) $ when $\gamma =O\left( \frac{\e^2}{m^2 \log\left({m}/{\e}\right)}\right)$. Kesselheim et al.\ \cite{kesselheimRTV13} proposed an algorithm that achieves a competitive ratio of $1-\e$  when $\gamma = O\left( \frac{\e^2}{1 + \log{d}}\right)$. Here $d$ is the maximum number of non-zero elements of any column of $A_t$ for all $t \in [n]$. When $d = m$ and $m \geq 2$ their condition gives $\gamma =   O\left( \frac{\e^2}{\log{m}}\right)$, which matches the necessary condition on $\gamma$.  When $d = 1$, their condition transforms to $\gamma =   O\left( {\e^2}\right)$, which also matches the necessary condition given in \cite{kleinberg2005multiple}. We call this algorithm \emph{KRTV} (dubbed after the initials of the authors' last names). At each $t\in [n]$, KRTV chooses the value of $x_t$ by solving a linear program with the same number of linear constraints as the original problem but with $t k$ variables. Devanur et al.\ \cite{devanur2011near} proposed an online algorithm that achieves a competitive ratio of $1-O(\e)$ under a model they call the adversarial stochastic model when $\max\{\gamma,  \gamma'\} =O\left( \frac{\e^2}{\log\left(m/\e\right)}\right)$. The adversarial stochastic model is more general than the i.i.d. model. However, their analysis does not apply to the random permutation model.

After finishing this paper, we became aware of two very recent results developed independent of this work and at around the same time. Authors in \cite{agrawal2014fast} and \cite{gupta2014experts} have analyzed similar algorithms to the one proposed in \cite{devanur2011near}.  In \cite{gupta2014experts}, the authors have analyzed this algorithm for online LP and demonstrated a competitive ratio of $1-O(\e$) when $\max\{\gamma,  \gamma'\} =O\left( \frac{\e^2}{\log\left(m/\e\right)}\right)$. They also prove their analysis holds true for a class of more general online linear programs that may include inequalities of form $\sum_{t=1}^{n}{A_t x_t } \geq b$. In \cite{agrawal2014fast}, authors have provided a competitive difference analysis for their algorithm applied to a class of general convex programs while showing that a specialized version of their algorithm for problem \eqref{vector_general_i} achieves a competitive ratio of $1-\e$ when $\max\{\gamma,  \gamma'\} =O\left( \frac{\e^2}{\log\left(m\right)}\right)$.
\subsection{Our results}

In this paper, we propose a modified and extended version of the online LP algorithm in \cite{devanur2011near} that applies to the more general problem \eqref{vector_general_i}. We prove that this algorithm achieves a competitive ratio of $1-O(\e)$ under the random permutation model when $\max\{\gamma,  \gamma'\} = O \left(\frac{\e^2}{\log\left({m}/{\e}\right)}\right)$ (the generalization of definition of $\gamma$ for \eqref{vector_general_i} is given in section \ref{general}). This condition is the same as the condition given in \cite{devanur2011near} for the adversarial stochastic model. 

We interpret the proposed algorithm as a form of subgradient descent, and call it \emph{exponentiated subgradient algorithm (ESA)}. ESA solves only $\log_2(1/\e)$ optimization problems with number of variables $n k\e,2 n k \e, 4 n k \e, \ldots, n k /2$. This makes ESA computationally as efficient as DLA, while having a theoretical guarantee on its competitive ratio under the random permutation model that is close to the theoretical guarantee for KRTV.

\subsection{Applications and special cases }
\label{Apps}

Consider the (non-convex) problem where $f_t(x) = c_t^T x$  over a binary domain ${\rm dom} f_t = \Gamma := \{x \in \{0,1\}^k \mid \mathbf{1}^{\top} x \leq 1\}$, which we refer to as the online allocation problem. It contains many combinatorial online problems studied in the literature such as online bipartite matching \cite{karp1990optimal}, the AdWords problem \cite{mehta2007adwords}, and the multiple secretary problem \cite{kleinberg2005multiple}. The convex relaxation of the online allocation problem is the online LP problem.  Note that all the online LP algorithms discussed above produce an integer solution; therefore, the bounds on their competitive ratio holds for the online allocation problem as well.

In many applications of online problems, the index $t$ is associated with time. We present an example of the online allocation problem using the terminology of servers and clients. Many other online allocation problems are equivalent to this example. Consider a server that has to serve a number of clients. At time $t$, a client sends an order of form $\left(c_t,A_t\right) \in \mathbb{R}_+ \times \mathbb{R}_+^m$. $A_t$ determines the amount of resources needed for the fulfillment of that order and $c_t$ is the price that the client is willing to pay for the order. From the point of view of the server, $c_t$ can be viewed as the utility associated with that order. The server has to make an irrevocable decision by choosing $x_t \in \{0,1\}$. If $x_t = 1$, the order is fulfilled and the total utility is increased by $c_t $; otherwise, the order is rejected and the total utility does not change. The goal of the server is to maximize the total utility while not exceeding any limit on resources. In the case where $k > 1$, each column of $A_t$ is an option. The server should make a decision of which option, if any, to fulfill  by choosing $x_t \in \Gamma$.  In the LP relaxation, each utility function is linear over its domain. One can assume a scenario where the order can be partially fulfilled with a utility function associated with each order that follows a diminishing return. In this case, every order is of form $\left(f_t,A_t\right)$, where $f_t$, $t=1,\ldots,n$ are concave functions. Special cases of online optimization problems where there is a relationship between $c_t$ and $A_t$ have been extensively studied in the literature; here we briefly review the results for four important special cases. For a thorough review of the results on these examples, we refer the reader to \cite{mehta13}.

\begin{enumerate}
\item Online Bipartite Matching: Consider a bipartite graph $G = \left(\left(U,V\right),E\right)$ with $|U| = n$ and $|V| = m$, where $|U|$ denotes the cardinality of $U$. Let $A = [a_1, a_2,\ldots, a_n] \in \{0,1\}^{ m \times n }$ be the adjacency matrix for $G$. Online bipartite matching is a special case of online allocation problem with $k = m$, $b = \mathbf{1}$,  $A_t = \sum_{i=1}^{m}a_{t,i}e_i e_i^T$, and $c_t = a_t$ for all $t\in [n]$, where $\{e_1,e_2,\ldots,e_m\}$ is the standard basis for $\mathbb{R}^m$. Karp et al.\ \cite{karp1990optimal} proposed a randomized online bipartite matching algorithm, called the ranking algorithm, that achieves $1-1/e$ competitive ratio under the worst-case model, which they proved is the best achievable competitive ratio by any online matching algorithm under this model. Under the random permutation model, the ranking algorithm achieves a competitive ratio of at least $0.696  > 1 - 1/e$ \cite{mahdian2011online} and no algorithm can achieve a competitive ratio larger that $0.727$ \cite{karande2011online}. Several authors have proposed online algorithms and studied their competitive ratio under the i.i.d. model when the algorithm receives the distribution as an input (see, e.g., \cite{feldman2009online2}, \cite{manshadi2012online} and \cite{jaillet2013online}). A competitive ratio of 0.706 under this model is achieved by the algorithm proposed in \cite{jaillet2013online}.


\item AdWords: This problem is a generalization of online bipartite matching with $A \in \mathbb{R}_{+}^{ m \times n }$ and $b \in  \mathbb{R}_{++}^{ m  }$. The name of this problem comes from its application in online advertising by search engines. A realistic assumption in this problem is that $\gamma$ is bounded by a small number. That renders the results for competitive ratio of online LP algorithms under i.i.d. or random permutation models applicable to this problem. In fact, the online AdWords algorithm proposed by Devanur and Hayes \cite{devanur2009adwords} for random permutation model inspired the later results for online LP algorithms. Mehta et al.\ provided an algorithm with $C_{\mathcal{D}_1} \rightarrow 1-1/e$ as $\gamma \rightarrow 0$, which is also shown to be the best competitive ratio under this model \cite{mehta2007adwords}. Buchbinder et al.\ \cite{buchbinder2007online} proposed an algorithm for the worst-case model that explicitly utilizes the dual variables and achieves the optimal competitive ratio. This algorithm retains a vector of dual variables and chooses the primal variable $x_t$, according to the complementary slackness rule. The dual variables are updated after each assignment of $x_t$. Devanur and Jain \cite{devanur2012online} proposed an algorithm for a concave version of AdWords, which linearizes the problem at each step and utilizes a similar primal-dual approach.

\item Multiple Secretary Problem: This problem has been proposed by Kleinberg \cite{kleinberg2005multiple} as a variation of the classical secretary problem. It can be viewed as a special case of an online allocation problem with $k=1$, $m=1$, $c_t \in \mathbb{R}_+$, $A_t = 1$  and $b \in \mathbb{N}$. The pessimistic example for the competitive ratio of online LP algorithms under the worst-case model falls under this problem with $b = 1$. Therefore, this problem and many of its generalization are stated and studied under the random permutation model. Kleinberg \cite{kleinberg2005multiple} proposed an algorithm with $C_{\mathcal{D}_2}  = 1-O\left(1/\sqrt{b}\right)$, which was shown to be order-wise optimal. In other words, no online algorithm can achieve $C_{\mathcal{D}_2} \geq 1- \e$ unless $\gamma = O\left(\e^2\right)$.  Bibaioff et al. \cite{babaioff2007knapsack} proposed a simple algorithm that achieves a competitive ratio of $\frac{1}{e}$ for any $b$ as $n$ tends to infinity. 



\item  Online Weighted Bipartite Matching: This problem is a generalization of online bipartite matching. It can also be viewed as a generalization of the multiple secretary problem. Consider a bipartite graph and its adjacency matrix $A \in  \{0,1\}^{ m \times n }$. Let $W = [w_1, w_2,\ldots, w_n] \in \mathbb{R}_{+}^{ m \times n }$ be the weight matrix associated with the edges in the graph. In this problem, $k = m$, $b  \in \mathbb{N}^m$,  $A_t = \sum_{i=1}^{m}a_{t,i}e_i e_i^T$ and $c_t = w_t$ for all $t\in [n]$. Similar to the AdWords problem when $\gamma$ is small, the results for online LP algorithms are applicable to this problem. Without any assumption on $\gamma$, $C_{\mathcal{D}_2} \leq \frac{1}{e}$ since this problem contains the secretary problem. Korula and P{\'a}l \cite{korula2009algorithms} proposed an algorithm with $C_{\mathcal{D}_2} \geq \frac{1}{8}$. Their algorithm incorporates the optimal stopping rule for the classical secretary problem. Kesselhiem et al. \cite{kesselheim2013optimal} proposed an algorithm that incorporates the same stopping rule and achieves the optimal competitive ratio of $C_{\mathcal{D}_2} = \frac{1}{e}$.
%

\end{enumerate}

\section{Preliminaries}

In this section, we review basic concentration inequalities that will be used in the rest of this paper.

\begin{prop}[Bernstein's inequality]\label{BernsteinInequality}
Suppose $Y_i$, $1\leq i\leq n$ are independent random variables taking values in $\mathbb{R}$. Let $Z = \sum_{i=1}^{n}Y_i$, $\mu = E[Z]$, and $v =  \sum_{i=1}^{n}E[Y_i^2]$. Let $\gamma>0$ and suppose $Y_i \leq \gamma$ for all $i$, then for all $r\in [0,\, 3/\gamma)$:

\begin{align*}
&\log E\left[\exp{\left(r \left(Z - \mu \right)\right)}\right] \leq \frac{v r^2}{2(1-\gamma r/3)};
\end{align*}

\noindent therefore, for all $t>0$:

\begin{align*}
&\mathbb{P}\left(Z- \mu >  t \right) \leq \exp{\left(-\frac{ t^2}{2(v+\gamma t/3)}\right)}.
\end{align*}

\end{prop}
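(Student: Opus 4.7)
The proof would proceed in two stages: first establish the MGF bound, and then obtain the tail bound by a standard Chernoff-type argument. Since the $Y_i$ are independent, the MGF of $Z - \mu$ factorizes, so the first stage reduces to bounding $\log E[\exp(r(Y_i - E[Y_i]))]$ for each single summand, and then summing.

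The scalar workhorse is the elementary inequality
$$e^x \leq 1 + x + \frac{x^2/2}{1 - x/3}, \qquad x < 3,$$
which follows from $e^x = \sum_{k \geq 0} x^k/k!$ together with the bound $k! \geq 2 \cdot 3^{k-2}$ for $k \geq 2$ (so the tail beyond the quadratic term is dominated by a geometric series in $x/3$). Applying this with $x = rY_i$ is legitimate because $rY_i \leq r\gamma < 3$, and crucially the denominator satisfies $1 - rY_i/3 \geq 1 - r\gamma/3 > 0$ for every realization (using $r \geq 0$ and $Y_i \leq \gamma$). Replacing the random denominator by its worst-case lower bound and taking expectations gives
$$E[e^{rY_i}] \leq 1 + r E[Y_i] + \frac{r^2 E[Y_i^2]/2}{1 - \gamma r/3}.$$
Using $\log(1+z) \leq z$ and subtracting $r E[Y_i]$ then yields the per-summand bound
$$\log E\exp\bigl(r(Y_i - E[Y_i])\bigr) \leq \frac{r^2 E[Y_i^2]/2}{1 - \gamma r/3},$$
and summing over $i$ produces the asserted MGF inequality with $v = \sum_i E[Y_i^2]$.

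For the tail bound, the plan is the standard Chernoff step: apply Markov's inequality to $\exp(r(Z-\mu))$ to obtain $\mathbb{P}(Z - \mu > t) \leq \exp\bigl(-rt + vr^2/[2(1-\gamma r/3)]\bigr)$ for every $r \in (0, 3/\gamma)$, and optimize over $r$. The optimizer is $r^* = t/(v + \gamma t/3)$, which lies in the admissible interval whenever $v > 0$. Substituting, the factor $1 - \gamma r^*/3$ collapses to $v/(v + \gamma t/3)$, and after a brief algebraic simplification the exponent becomes exactly $-t^2/[2(v + \gamma t/3)]$, as claimed.

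The only genuinely delicate point is verifying that $e^x \leq 1 + x + (x^2/2)/(1 - x/3)$ holds for \emph{all} $x < 3$, including negative $x$, since the MGF argument applies it pointwise to a random variable $Y_i$ that is controlled only from above. Once that elementary inequality is in hand, everything else---the pointwise monotonicity argument that replaces the random $1 - rY_i/3$ by $1 - \gamma r/3$, the independence factorization, and the algebraic optimization in $r$---is routine bookkeeping.
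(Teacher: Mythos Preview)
Your proposal is correct and is the standard proof of Bernstein's inequality; the paper itself gives no argument at all but simply cites Theorem~2.10 and Corollary~2.11 of Boucheron--Lugosi--Massart, where essentially this same proof appears. One small remark on the point you yourself flag as delicate: the series comparison $k!\ge 2\cdot 3^{k-2}$ only yields $e^x\le 1+x+\tfrac{x^2/2}{1-x/3}$ directly for $x\in[0,3)$, since for $x<0$ the tail terms alternate in sign and term-by-term domination fails. The inequality does hold for all $x<3$, but it needs a separate check---e.g., show that $f(x)=(1-x/3)(1+x)+x^2/2-(1-x/3)e^x$ is convex with $f(0)=f'(0)=0$, or (as in the cited reference) use the monotonicity of $u\mapsto(e^u-1-u)/u^2$ to reduce to the nonnegative case $u=r\gamma$. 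Once that is in hand, the rest of your outline (factorization, $\log(1+z)\le z$, and the Chernoff step with $r^*=t/(v+\gamma t/3)$) is exactly right.
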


For proof see theorem 2.10 and corollary 2.11 of \cite{boucheron2013concentration}. Note that if $0 \leq  Y_i \leq \gamma$ for all $i$, then $v = \sum_{i=1}^{n}E[Y_i^2] \leq \gamma  \sum_{i=1}^{n}E[Y_i] = \gamma \mu $, which simplifies the Bernstein's inequality:

\begin{align}\label{Bernstein}
&\mathbb{P}\left(Z- \mu >  t \right) \leq \exp{\left(-\frac{t^2}{2\gamma(\mu + t/3)}\right)},\\
&\mathbb{P}\left(Z- \mu < - t \right) \leq \exp{\left(-\frac{ t^2}{2\gamma \mu}\right)},
\end{align}

\noindent for all $t > 0$. 

\begin{prop}[Hoeffding's theorem]\label{Hoeffding}
Let $X_1,\ldots,X_n$ be samples from a finite multiset $M \subset \mathbb{R}$ drawn without replacement, and let $Y_1,\ldots,Y_n$ be samples drawn with replacement from $M$. If $f$ is a convex function, then:

\begin{align}
E\left[f\left(\sum_{i=1}^{n}X_i\right)\right]\leq E\left[f\left(\sum_{i=1}^{n}Y_i\right)\right].
\end{align}
\end{prop}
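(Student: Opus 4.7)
The plan is to prove $E[f(S_X)]\leq E[f(S_Y)]$, where $S_X:=\sum_{i=1}^n X_i$ and $S_Y:=\sum_{i=1}^n Y_i$, by reducing it to a combinatorial convex comparison via conditioning on the multiplicity pattern of the with-replacement sample. Let $N=|M|$ and enumerate $M=(m_1,\dots,m_N)$ with multiplicity, and write $Y_i=m_{\tau_i}$ for i.i.d.\ uniform indices $\tau_i\in[N]$. Conditioning on the random partition $\lambda=(c_1,\dots,c_k)$ of $n$ giving the multiplicity profile of $(\tau_1,\dots,\tau_n)$, symmetry implies that the $k$ distinct values of $\tau$ together with the pairing of the parts $c_1,\dots,c_k$ to them are distributed as a uniformly random ordered $k$-tuple $(W_1,\dots,W_k)$ of distinct elements of $[N]$, whence
\[
E[f(S_Y)\mid \lambda=(c_1,\dots,c_k)] \;=\; E_{(W_1,\dots,W_k)}\!\left[f\!\left(\textstyle\sum_{l=1}^k c_l\, m_{W_l}\right)\right],
\]
with $(W_1,\dots,W_k)$ a uniformly random without-replacement sample of size $k$ from $M$.

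The theorem thus reduces to showing that for every partition $(c_1,\dots,c_k)$ of $n$,
\[
E_W\!\left[f\!\left(\textstyle\sum_{l=1}^k c_l\, m_{W_l}\right)\right] \;\geq\; E[f(S_X)],
\]
since averaging over the random $\lambda$ then gives $E[f(S_Y)]\geq E[f(S_X)]$. I would establish this by induction on the amount of repetition $n-k$; the base case $n-k=0$ is an equality, and the inductive step compares the partition $(c,c_2,\dots,c_k)$ with $c=c_1\geq 2$ against its refinement $(c-1,1,c_2,\dots,c_k)$. Conditioning on the indices $s_2,\dots,s_k\in[N]$ associated with $c_2,\dots,c_k$, and writing $T=[N]\setminus\{s_2,\dots,s_k\}$ with $|T|=N-k+1$ and $S=\sum_{l\geq 2} c_l\, m_{s_l}$, the step inequality boils down to
\[
\textstyle\sum_{a,b\in T} f(c\, m_a+S) \;\geq\; \textstyle\sum_{a,b\in T} f((c-1)m_a+m_b+S).
\]
This follows from the convex combination identity $(c-1)m_a+m_b+S=\tfrac{c-1}{c}(c\,m_a+S)+\tfrac{1}{c}(c\,m_b+S)$: applying convexity of $f$, symmetrizing $(a,b)\leftrightarrow(b,a)$, and summing over $T\times T$ delivers the bound.

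Iterating the swap drives any partition down to the all-ones partition, whose value is exactly $E[f(S_X)]$, completing the induction. The main obstacle is the swap inequality itself: the conditioning on $\lambda$ is routine exchangeability bookkeeping, but identifying the right convex combination and carrying out the symmetrization so that the final summation identity falls out cleanly is the combinatorial heart of the proof.
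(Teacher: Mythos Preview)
The paper does not give a proof of this proposition: it is stated as a classical result (Hoeffding's 1963 theorem) and invoked only so that Bernstein's inequality can be transferred to sampling without replacement. There is therefore nothing in the paper to compare your argument against.

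That said, your proof is correct and is essentially Hoeffding's original argument. The conditioning on the multiplicity profile $\lambda$ is equivalent to the standard trick of writing $Y_i=m_{\pi(\tau_i)}$ with $\pi$ a uniform permutation of $[N]$ independent of $\tau$: for fixed $\tau$, the images $(\pi(j_1),\dots,\pi(j_k))$ of the distinct indices form a uniform ordered $k$-sample without replacement, which is exactly your $(W_1,\dots,W_k)$. Your swap step is the elementary majorization transfer showing that $D\mapsto E_\pi\bigl[f\bigl(\sum_j D_j\,m_{\pi(j)}\bigr)\bigr]$ is Schur-convex on $\{D\in\mathbb{Z}_{\ge 0}^N:\sum_j D_j=n\}$; the convex-combination identity $(c-1)m_a+m_b+S=\tfrac{c-1}{c}(c\,m_a+S)+\tfrac{1}{c}(c\,m_b+S)$ is precisely the right tool, and summing the resulting Jensen bound over $(a,b)\in T\times T$ gives the displayed inequality directly. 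One small remark: your double sums over $T\times T$ include the diagonal $a=b$, which on the right-hand side contributes $f(c\,m_a+S)$ rather than a genuine $(k{+}1)$-sample term, but these diagonal contributions appear identically on both sides and cancel, so the displayed inequality is indeed equivalent to the conditional comparison you need between the $k$-part and $(k{+}1)$-part expectations.
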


Using proposition~\ref{Hoeffding}, one can apply the result of proposition~\ref{BernsteinInequality} to random variables sampled without replacement. Note that one can sharpen some of the bounds for the case of sampling without replacement; for example, see \cite{serfling1974probability}. 

\begin{prop}[Doob's maximal inequality]
Suppose $S_i$, $i = 1,2,\ldots, n$ is a sub-martingale. Let $M_n = \max_{1 \leq i \leq n}S_i$, then for all $t>0$:
\begin{align}
&t \mathbb{P}\left(M_n > t\right) \leq E \left\lvert S_n\right\lvert,\\
&\mathbb{P}\left(M_n > t\right) \leq \inf_{r>0}\frac{E\left[\exp{\left(r S_n\right)}\right]}{\exp{\left(r t\right)}}.
\end{align}
\end{prop}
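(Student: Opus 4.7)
The plan is to prove the two inequalities in sequence, with the second being a routine consequence of the first together with a convexity-preservation argument.

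For the first inequality, I would decompose the event $\{M_n > t\}$ along the first time the sub-martingale exceeds $t$. Define the disjoint events $A_k = \{S_1 \leq t,\ldots, S_{k-1}\leq t,\, S_k > t\}$ for $k = 1,\ldots,n$, so that $\{M_n > t\} = \bigsqcup_{k=1}^{n} A_k$. Each $A_k$ lies in $\mathcal{F}_k$, the natural filtration up to time $k$. The key observation is that on $A_k$ we have $S_k > t$, and by the sub-martingale property $E[S_n \mid \mathcal{F}_k] \geq S_k$. Therefore
\begin{align*}
E[S_n \mathbf{1}_{A_k}] = E\bigl[E[S_n\mid \mathcal{F}_k]\mathbf{1}_{A_k}\bigr] \geq E[S_k \mathbf{1}_{A_k}] \geq t\,\mathbb{P}(A_k).
\end{align*}
Summing $k=1,\ldots,n$ and using disjointness gives
\begin{align*}
t\,\mathbb{P}(M_n > t) \leq E[S_n \mathbf{1}_{M_n > t}] \leq E[S_n^{+} \mathbf{1}_{M_n > t}] \leq E|S_n|,
\end{align*}
which is the first claim.

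For the second inequality, I would fix $r>0$ and apply the pointwise identity $\{M_n > t\} = \{\max_{1\leq i\leq n}\exp(rS_i) > \exp(rt)\}$. Since $x\mapsto \exp(rx)$ is convex and non-decreasing for $r>0$, the process $S_i' := \exp(rS_i)$ is a (non-negative) sub-martingale by Jensen's inequality applied to conditional expectations. Applying the first inequality to $S_i'$ with threshold $\exp(rt)$ yields
\begin{align*}
\exp(rt)\,\mathbb{P}(M_n > t) \leq E\bigl|\exp(rS_n)\bigr| = E[\exp(rS_n)],
\end{align*}
and taking the infimum over $r>0$ gives the stated bound.

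The only subtle step is verifying that $\exp(rS_i)$ inherits the sub-martingale property; this is standard but deserves a one-line justification via Jensen on $E[\exp(rS_{i+1}) \mid \mathcal{F}_i] \geq \exp(rE[S_{i+1}\mid \mathcal{F}_i]) \geq \exp(rS_i)$, where the last step uses both monotonicity of $\exp(r\cdot)$ and the sub-martingale hypothesis on $S_i$. Everything else is routine — no measurability or integrability issues arise provided $E|S_n| < \infty$ (respectively $E[\exp(rS_n)] < \infty$), which may be tacitly assumed since otherwise the bounds are vacuous.
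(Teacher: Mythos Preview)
Your proof is correct and follows the standard stopping-time decomposition argument for Doob's maximal inequality. Note, however, that the paper does not actually supply a proof of this proposition: it simply states the result and cites an external reference (``For proof see chapter 8.10 \cite{shorack2000probability}''), so there is no in-paper argument to compare against.
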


For proof see chapter 8.10 \cite{shorack2000probability}. 

In the rest of this paper, $\Pi$ denotes the set of all the permutations of $\left[n\right]$, i.e., set of bijections from $[n]$ to $[n]$. For all $B \subset \Pi$, we denote the complement of $B$ with $\bar{B}$, and we define :

\begin{align}
&1_{B}(s) = \left\{\begin{array}{cc} 1 & s \in B,\\ 0 &  s \notin B.\end{array}\right.
\end{align}

We denote the transpose of a vector $v$ by $v^{\top}$. For a function $f: \mathbb{R}^k \mapsto [-\infty, +\infty)$, $f^*$ denotes the concave conjugate of $f$, and is defined as:

\begin{align*}
f^*(v) = \inf_{x} v^\top x - f(x),
\end{align*}

\noindent for all $v \in \mathbb{R}^k$. For a concave function $f$, $\partial f(x)$ denotes the set of supergradients of $f$ at $x$, i.e., the set of all $v \in \mathbb{R}^k$ such that:

\begin{align*}
\forall x' \in \mathbb{R}^k: \quad f(x') \leq v^{\top} (x' - x) + f(x).
\end{align*}

For a convex function $f$, we use the same notation $\partial f(x)$ to denote the set of  subgradients of $f$ at $x$, i.e., the set of all $v \in \mathbb{R}^k$ such that:

\begin{align*}
\forall x' \in \mathbb{R}^k: \quad f(x') \geq v^{\top} (x' - x) + f(x).
\end{align*}
%

\section{A feasibility problem}\label{sec::simple}
We start by considering a simple form of ESA applied to a linear feasibility problem to illustrate the proof technique and ideas. Let $\Delta = \{x \in \mathbb{R}^k | \mathbf{1}^{\top} x = 1, \; x \geq 0\}$. Consider a spacial case of \eqref{vector_general_i}, where $b = \mathbf{1} \in \mathbb{R}^m$, and for all $t\in [n]$, $f_t(x) = I_{x \in \Delta}$. This problem can be represented as the following feasibility problem:

%

\begin{align}\label{feasibility}
\begin{array}{llll}
\mbox{Find}&{   (x_1,x_2,\ldots,x_n) \in \Delta^n }&\mbox{such that} &\sum_{t=1}^{n}{A_t x_t } \leq  \mathbf{1},
\end{array}
\end{align}

\noindent where for all $t$, $A_t \in \mathbb{R}_{+}^{m\times k}$. Let $\left(x^*_1,x^*_2,\ldots,x^*_n\right)$ be a feasible solution for \eqref{feasibility}. Define $\gamma = \max_{t,i,j} A_{t,i,j}$ and choose $\sigma$ uniformly at random from $\Pi$. Consider Algorithm 1 for problem \eqref{feasibility}.
\begin{algorithm}
\caption{}\label{simple}
\begin{algorithmic}
\REQUIRE{$\epsilon \in (0,1)$, $\gamma > 0$}
\STATE{$\nu = \frac{1}{\gamma}\log{\left(1+\e\right)}$}  
\FOR{$t = 1:n$}   
\STATE{$x_t \in \argmin_{z \in \Delta}{\sum_{i=1}^{m}{ \exp{\left({\nu} \sum_{s=1}^{t-1}\left(A_{\sigma\left(s\right)}x_s\right)_i \right)}\left(A_{\sigma\left(t\right)}z\right)_i}}$}
\ENDFOR
\end{algorithmic}
\end{algorithm}

Algorithm \ref{simple} is proposed by Devanur et al. \cite{devanur2011near}. Here we extend the analysis of algorithm \ref{simple} to the random permutation model. We first derive concentration inequalities for the running sums of form ${\sum_{s=t}^{n}\left(A_{\sigma\left(s\right)} x^*_{\sigma\left(s\right)}\right)_i }$. These inequalities will be used in the analysis of the algorithm, where we compare $\left(x_1,x_2,\ldots,x_n\right)$, the solution given by the algorithm, with $\left(x^*_1,x^*_2,\ldots,x^*_n\right)$. Let $L = \log_2(\frac{1}{\e})$. To simplify the notation, we assume that $L$ and $n \e$ are integers in the rest of this paper. Let $\k: [n(1-\e)]\mapsto [L]$ be defined as $\k(t) = \left\lfloor \log_2\left(\frac{n-t}{n\epsilon}\right)\right\rfloor + 1 $. Equivalently, $\k(t) = k$  if and only if $n (1-2^{k}\epsilon ) < t \leq n \left(1-2^{k-1}\epsilon\right)$ with $k \in [L]$. Furthermore, define:

\begin{align*}
\forall t \in \left[n\right], \forall i\in \left[m\right]&: {X_t^*}^i = \left(A_{\sigma\left(t\right)} x^*_{\sigma\left(t\right)}\right)_i,\\
\forall t \in \left[n\right], \forall i\in \left[m\right]&: X_t^i = \left(A_{\sigma\left(t\right)} x_{t}\right)_i,\\
\forall t \in [n]&: R_t^i = \frac{\sum_{s=t}^{n}{X_s^*}^i }{n-t+1} - \frac{1}{n}, \\
\forall t \in [n]&:  B_t= \bigcap_{i=1}^{m}\left\{\sigma\left\lvert R_t^i\leq \frac{ 1}{n} 2^{-\frac{\k(t)}{2}}\epsilon^{\frac{1}{2}}\right.\right\},\\
\forall k \in [L]&: M_k^i = \max_{ n(1-2^{k}\epsilon)< t \leq n(1-2^{k-1}\epsilon)}{R_t^i}. 
\end{align*}

Note that $E\left[\frac{1 }{n-t+1}\sum_{s=t}^{n}{X_s^*}^i\right] \leq \frac{1}{n}$ for all $i \in [m]$ and $t \in \left[n\right]$. The event $B_t$ consists of all the permutations for which $\max_i \frac{1 }{n-t+1}\sum_{s=t}^{n}{X_s^*}^i$ does not exceed $\frac{1}{n}$ by more than a small fraction of $\frac{1}{n}$. The purpose of the next two paragraphs is to bound the probability of $\cup_{1}^{n(1-\e)}\bar{B}_t$. To do so, we first show that for all $i$, $R_t^i$ is a martingale:

\begin{align*}
E[R_t^i| R_{t-1}^i, \ldots, R_{1}^i] &= E\left[\left.\frac{\sum_{s=t-1}^{n}{{X_s^*}^i } - {X^*}_{t-1}^i}{n-t+1} - \frac{1}{n}\right\lvert R_{t-1}^i, \ldots, R_{1}^i \right]\\
&= \frac{\sum_{s=t-1}^{n}{{X_s^*}^i }}{n-t+1}  - \frac{1}{n}- \frac{1}{n-t+1}   E\left[\left.{X^*}_{t-1}^i\right\lvert R_{t-1}^i, \ldots, R_{1}^i \right] \\
&= \frac{\sum_{s=t-1}^{n}{{X_s^*}^i }}{n-t+1}  - \frac{1}{n}- \frac{\sum_{s=t-1}^{n}{{X_s^*}^i }}{(n-t+1)(n-t+2)} =  R_{t-1}^i.
\end{align*}

By Doob's maximal inequality and Bernstein's inequality:

\begin{align}
\forall k \in [L]:&\quad \mathbb{P}\left(M_k^i > \frac{2^{-\frac{k}{2}}\epsilon^{\frac{1}{2}}}{n}\right) \leq \inf_{r \geq 0}{E\left[\exp{\left(r R_{n(1-2^{k-1}\epsilon)}^i \right)}\right]  \exp{\left(-\frac{r 2^{-\frac{k}{2}}\epsilon}{n}\right)}}\leq  \exp{\left(\frac{-\epsilon^2}{6 \gamma }\right)},\\ \label{prob-bound}
\Rightarrow \quad &\mathbb{P}\left(\bigcup\limits_{t=1}^{n(1-\epsilon)} \bar{B}_t\right) \leq \sum_{k=1}^{L} \mathbb{P}\left(\bigcup\limits_{n(1-2^{k}\epsilon)}^{ n(1-2^{k-1}\epsilon)}\bar{B}_t\right) = \sum_{k=1}^{L}\mathbb{P}\left(\max_i M_k^i > \frac{2^{-\frac{k}{2}}  \epsilon^{\frac{1}{2}}}{n}\right) \leq m L \exp{\left(\frac{-\epsilon^2}{6 \gamma }\right)}.
\end{align}

\begin{thm}\label{thm1}
Suppose $x_t$, $1\leq t \leq n$ is given by algorithm \ref{simple}. Let $T= n(1-\epsilon)$.  If  $\e \leq \frac{1}{4}$ and $\gamma \leq \frac{\epsilon^2}{12 \log\left(m/\epsilon\right)}$, then $\mathbb{P}\left(\max_i \sum_{s=1}^{T}X_s^i > (1+2\e) \right) \leq  \e$.
\end{thm}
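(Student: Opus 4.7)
The plan is to run an exponential-potential (multiplicative-weights) analysis. Define $\Phi_t = \sum_{i=1}^m \exp(\nu S_t^i)$ with $S_t^i := \sum_{s=1}^t X_s^i$, so $\Phi_0 = m$. Since $X_t^i = (A_{\s(t)}x_t)_i \in [0,\gamma]$ and $\nu\gamma = \log(1+\e)$, convexity of $\exp$ on $[0,\log(1+\e)]$ gives $\exp(\nu X_t^i) \leq 1 + (\e/\gamma)X_t^i$. Writing $p_t^i := \exp(\nu S_{t-1}^i)/\Phi_{t-1}$, the minimization in Algorithm~\ref{simple} is exactly $\argmin_{z\in\Delta} \sum_i p_t^i (A_{\s(t)}z)_i$ up to the common factor $\Phi_{t-1}$, so comparing $x_t$ with the feasible point $x^*_{\s(t)}\in\Delta$ yields $\sum_i p_t^i X_t^i \leq \sum_i p_t^i {X_t^*}^i$. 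Combining these produces the key one-step recursion $\Phi_t \leq \Phi_{t-1}\bigl(1 + (\e/\gamma)\sum_i p_t^i {X_t^*}^i\bigr)$.

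The next step is to convert this into a supermartingale. Let $\mathcal{F}_{t-1} = \s(\s(1),\dots,\s(t-1))$; then $p_t^i$ and $R_t^i$ are $\mathcal{F}_{t-1}$-measurable, and the random-permutation identity $E[{X_t^*}^i\mid\mathcal{F}_{t-1}] = (n-t+1)^{-1}\sum_{s=t}^n {X_s^*}^i = 1/n + R_t^i$ is immediate. Setting $c_t := (\e/\gamma)\sum_i p_t^i (1/n+R_t^i)$, which is $\mathcal{F}_{t-1}$-measurable, the process $\mathcal{M}_t := \Phi_t / \prod_{s=1}^t(1+c_s)$ becomes a nonnegative supermartingale with $E[\mathcal{M}_T]\leq m$.

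Now work on the good event $\mathcal{B}:=\bigcap_{t=1}^T B_t$, whose complement has probability at most $\e/2$ by the union bound \eqref{prob-bound} under the hypothesis $\gamma \leq \e^2/(12\log(m/\e))$. On $\mathcal{B}$, $\max_i R_t^i \leq 2^{-\k(t)/2}\e^{1/2}/n =: \alpha_t/n$, so $c_t \leq \e(1+\alpha_t)/(n\gamma)$. A direct summation using the geometric partition of $[T]$ by $\k$---level $k$ contains $n\,2^{k-1}\e$ indices each contributing weight $2^{-k/2}\e^{1/2}$---gives $\sum_{t=1}^T \alpha_t = O(n\e)$, hence $\prod_{s\leq T}(1+c_s) \leq \exp(\e(1+O(\e))/\gamma)$. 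The event $\{\max_i S_T^i > 1+2\e\}$ forces $\Phi_T > \exp(\nu(1+2\e))$ (since $\Phi_T \geq \exp(\nu\max_i S_T^i)$), which on $\mathcal{B}$ forces $\mathcal{M}_T > \exp\bigl((\log(1+\e)(1+2\e) - \e(1+O(\e)))/\gamma\bigr)$. Expanding $\log(1+\e)(1+2\e) = \e + \tfrac{3}{2}\e^2 + O(\e^3)$, this threshold is $\exp(\Omega(\e^2/\gamma))$, and under the hypothesis on $\gamma$ it is at least $(m/\e)^c$ for some $c>1$. Markov's inequality then yields $\mathbb{P}(\mathcal{M}_T > \text{threshold}) \leq \e/2$, and combining with $\mathbb{P}(\mathcal{B}^c) \leq \e/2$ closes the argument.

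The main obstacle is the tight constant budget. The slack in the exponent of the potential, only $\Theta(\e^2/\gamma)$ in size, must simultaneously absorb the $(1+\alpha_t)$ accumulation from conditional expectations and the logarithmic loss from Markov's inequality; this forces an honest evaluation of $\sum_t \alpha_t$ exploiting the geometric structure of $\k$ rather than a crude bound, and the choice $\nu = \log(1+\e)/\gamma$ (instead of the simpler $\nu = \e/\gamma$) is critical because it generates the $\tfrac{3}{2}\e^2$ second-order term that creates the needed slack after subtracting the $\e(1+O(\e))$ loss from the conditional-expectation bound.
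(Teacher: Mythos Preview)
Your proposal is correct and follows essentially the same exponential-potential/supermartingale argument as the paper: the paper packages the normalization as a fixed sequence $\beta_t$ together with the indicator $\prod_s 1_{B_s}$ inside the potential, whereas you normalize by the data-dependent product $\prod_s(1+c_s)$ and intersect with $\mathcal{B}$ only at the end, but the one-step bound, the comparison with $x^*_{\sigma(t)}$, the conditional-expectation identity $E[{X_t^*}^i\mid\mathcal{F}_{t-1}] = 1/n + R_t^i$, the geometric summation of the $\alpha_t$, and the final Markov-plus-union-bound step are identical in substance. The only caveat is that your $O(\cdot)$ bookkeeping hides constants that are genuinely tight under the hypothesis $\gamma \le \e^2/(12\log(m/\e))$ and $\e\le 1/4$; the paper tracks them explicitly, and you would need to do the same to close the proof with the stated constants.
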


\begin{proof}

Define:

\begin{align*}
\forall t\in [T]&: \beta_t=\frac{\e}{\gamma n}\left({1 + 2^{-\frac{\k(t)}{2}}\epsilon^{\frac{1}{2}}}\right),\\
\forall t \in [T], \forall i \in [m] &: \phi^t_i = {{ \exp{\left(\sum_{s=1}^{t}{\nu}\left(A_{\sigma\left(s\right)}x_s\right)_i  - \beta_s \right)}}},\\
\forall t \in [T]&:{\Phi^{t}} = \sum_{i=1}^{m} {\phi^{t}_i}\prod_{s=1}^{t}1_{B_s}.
\end{align*}

For all $i \in [m]$, define $\phi_i^0 = 1$ and $\Phi^0 = \sum_{i=1}^{m} \phi_i^0 = m$. Let $\mathcal{A}_0 = \{\varnothing, \Pi \}$, and let $\mathcal{A}_t$ be the sigma algebra generated by $\sigma(1),\ldots, \sigma(t)$.  We first show that $\left\{ \Phi_t, {\cal A}_t\right\}$, $t \in \{0,1,\ldots, T\}$ is a super-martingale:

\begin{align*}
 E\left[\left.{\Phi^{t}}\right\lvert \mathcal{A}_{t-1} \right]&= E\left [\left.\left(\sum_{i=1}^{m} \phi_i^{t-1} \exp{\left(\nu X_{t}^i- \beta_t \right)}\right)\prod_{s=1}^{t}1_{B_s}\right\lvert \mathcal{A}_{t-1} \right]\\
&\leq E\left [\left.\left(\sum_{i=1}^{m} \phi_i^{t-1} \left(1+\frac{\e }{\gamma}X_{t}^i\right)\exp{\left(- \beta_t \right)}\right)\prod_{s=1}^{t}1_{B_s}\right\lvert \mathcal{A}_{t-1} \right]\\
&\leq E\left [\left.\left(\sum_{i=1}^{m} \phi_i^{t-1} \left(1+\frac{\e}{\gamma} {X_t^*}^i\right)\exp{\left(- \beta_t \right)}\right)\prod_{s=1}^{t}1_{B_s}\right\lvert \mathcal{A}_{t-1} \right]\\
&= \sum_{i=1}^{m} \phi_i^{t-1} E\left[\left. 1+\frac{\e}{\gamma} {X_t^*}^i \right\lvert \mathcal{A}_{t-1}\right] \exp{\left(- \beta_t \right)} \prod_{s=1}^{t}1_{B_s}\\
&\leq \sum_{i=1}^{m} \phi_i^{t-1}  1_{\left\{R_t^i\leq \frac{1}{n}{2^{-{\k(t)}/{2}}\epsilon^{{1}/{2}}}\right\}}  \exp{\left(\frac{\e}{\gamma}\left(\frac{1}{n} + R_t^i \right)-\beta_t \right)}\prod_{s=1}^{t-1}1_{B_s}\\
&\leq \sum_{i=1}^{m} \phi_i^{t-1}  \prod_{s=1}^{t-1}1_{B_s} = \Phi^{t-1}.
\end{align*}

The first inequality follows from:

\begin{align} \label{el3}
&\forall \nu  \in \mathbb{R}, \forall x \in [0,1]:   \quad e^{\nu x} \leq 1+ x ( e^{\nu} - 1).
\end{align}

The second inequality follows from the fact that

 $$x_t \in \argmin_{\begin{array}{c}z \in \Delta\end{array}}{\sum_{i=1}^{m}{ \exp{\left({\nu} \sum_{s=1}^{t-1}\left(A_{\sigma\left(s\right)}x_s\right)_i \right)}\left(A_{\sigma\left(t\right)}z\right)_i}},$$

\noindent and thus $x_t$ as the minimizer achieves a smaller objective value than any other $x\in \Delta$ including $x^*_{\s(t)}$. Let $F =  \cap_{s=1}^{T} {B}_s $. The previous result combined with Markov's inequality yields:

\begin{align}\notag
\mathbb{P}\left(\left\{\max_i \sum_{s=1}^{T}X_s^i > \frac{T}{n}\left(1+3\epsilon\right)\right\}\cap F\right) &= \mathbb{P}\left(\left\{\max_i \phi_i^T > \exp{\left(\nu \frac{T}{n}\left(1+3\epsilon\right)-\sum_{s=1}^{T}   \beta_s \right)}\right\}\cap F\right)\\\notag
&\leq \mathbb{P}\left(\left\{\sum_{i=1}^{m} \phi_i^T >  \exp{\left(\nu \frac{T}{n}\left(1+3\epsilon\right)-\sum_{s=1}^{T}   \beta_s \right)}\right\}\cap F\right)\\\notag
&\leq \mathbb{P}\left(\left\{\sum_{i=1}^{m} \phi_i^T \prod_{t=1}^{T}1_{B_t}> \exp{\left(\nu \frac{T}{n}\left(1+3\epsilon\right) -\sum_{s=1}^{T}   \beta_s \right)}\right\}\right) \\\notag
&\leq E\left[\Phi^{T}\right]\exp{\left(\frac{T}{n \gamma}\left(\e \left(1+2\epsilon\right) - \left(1+3\epsilon\right) {\nu}{\gamma}\right)\right)} \\\notag
&\leq E\left[\Phi^0\right]\exp{\left(\frac{T\left(1+2\epsilon\right)}{n \gamma}\left(\epsilon-\left(1+\frac{ \epsilon}{1+2\e}\right)\log{\left(1+\e\right)}\right)\right)}  \\ \label{fuck}
&\leq  m\exp{\left(\frac{-(1-\e)(1+2\e)\epsilon^2}{6\gamma(1+\e/3)}\right)}\leq  m\exp{\left(\frac{-\epsilon^2}{6\gamma}\right)}.
\end{align}

In the last line, we used the following inequality:

\begin{align}\label{el1}
&\forall u > 0, \forall \alpha \in \left[\frac{1}{2}, 1\right] : \quad  u - (1+\alpha  u)\log(1+ u) \leq \frac{- (\alpha - 1/2) u^2}{1+ u/3}.
\end{align}

Using the bounds given by \eqref{prob-bound} and \eqref{fuck}, we can conclude that:

\begin{align*}
\mathbb{P}\left(\max_i \sum_{s=1}^{T}X_s^i > \left(1+2\epsilon\right)\right) &\leq \mathbb{P}\left(\max_i \sum_{s=1}^{T}X_s^i > \frac{T}{n}\left(1+3\epsilon\right)\right) \\
 &\leq \mathbb{P}\left(\left\{\max_i \sum_{s=1}^{T}X_s^i > \frac{T}{n}\left(1+3\epsilon\right)\right\}\cap F\right) + \mathbb{P}\left(\bar{F}\right)\\
&\leq m(L+1) \exp{\left(\frac{-\epsilon^2}{6 \gamma }\right)} \leq  \e ,
\end{align*}

\noindent where the second inequality follows from the fact that for any two events $E$ and $F$, $\mathbb{P}(E) = \mathbb{P}(E \cap F) + \mathbb{P}(E \cap \bar{F}) \leq \mathbb{P}(E \cap F) + \mathbb{P}( \bar{F})$. In the last line, we used the fact that $L = \log_2(\frac{1}{\e})\leq \log_2(e) \e^{-\frac{1}{e}}$ alongside the assumption that $\e \leq \frac{1}{4}$.

\end{proof}

In the next section, we introduce ESA in its full generality and provide a competitive ratio analysis for ESA under the random permutation model.

\section{General problem}\label{general}

Recall that the offline optimization problem is as follows:

\begin{align}\tag{$\mathcal{P}1$}\label{vector_general}
P^* = \sup_{x_1,\ldots,x_n\in \mathbb{R}^k}&{  \quad \sum_{t=1}^{n} {f_t(x_t)}+\psi\left(\sum_{t=1}^{n}{A_t x_t }\right)},
\end{align}

\noindent where for all $t \in [n]$, $A_t \in \mathbb{R}_+^{m\times k}$, $f_t \in \mathcal{G}$, and $\psi(u) =  I_{u \leq b}$ for some $b \in  \mathbb{R}_{++}^m$. Here $\mathcal{G}$ is a set of proper, concave, and upper semi-continuous functions on $\mathbb{R}^k$ with bounded super-level sets such that for all $f \in \mathcal{G}$, ${\rm dom} f \subset \mathbb{R}_{+}^k$ and $f(0) = 0$. 

The Fenchel dual program is:

\begin{align}\tag{$\mathcal{D}1$}
D^* = \inf_{y \in \mathbb{R}^m}&{  \quad \sum_{t=1}^{n} {-f^*_t(A_t^\top y)} - \psi^*\left(-y \right)}.
\end{align}

A pair of primal and dual variables $\left(\left(x^*_1,x^*_2,\ldots,x^*_n\right),y^*\right)$ is an optimal pair when:

\begin{align} 
&  y^* \in  -\partial \psi\left(\sum_{t=1}^{n}{A_t x_t^* } \right)\label{cond_g_1},\\ \notag
\\
&\forall t \in [n]: \quad A_t^\top y^* \in  \partial{f_t}\left(x_t^*\right). \label{cond_g_2}  
\end{align}

By our assumption on the functions involved such a pair exists and $0 \leq P^* = D^* < +\infty$. We assume that $P^* > 0$. Let $\gamma$ be such that:

\begin{align}\label{gamma}
\forall t\in[n]: \quad \gamma \geq \max\left(\left\{\left.\frac{\left(A_t x\right)_i}{b_i}\right\lvert i \in [m], f_t(x) \geq 0\right\} \cup \left\{\left.\frac{f_t(x)}{P^*}\right\lvert x \in {\rm dom}f_t\right\}\right).
\end{align}

Note that for all $t \in [n]$:

\begin{align*}
 &0\leq f_t(x^*_t)  ,\\
&0 \leq - f^*_t(A_t^\top y^*) = f_t(x^*_t) - {y^*}^{\top} A_t x^*_t \leq \gamma P^*.
\end{align*}

ESA is designed to solve the following problem which is equivalent to \eqref{vector_general}:



\begin{align}\tag{$\mathcal{P}2$}\label{vector_general_equiv}
\underset{x_1,\ldots,x_n\in \mathbb{R}^k}{\mbox{maximize}}&{\quad \hat{\psi}\left(-\sum_{t=1}^{n} {f_t(x_t)}\right)+\psi\left(\sum_{t=1}^{n}{A_t x_t }\right)},\end{align}

\noindent where $\hat{\psi}(v) = I_{v \leq - P^*}$. Since $P^*$ cannot be calculated without having access to $\left(\left(f_1,A_1\right),\left(f_2,A_2\right),\ldots,\left(f_n,A_n\right)\right)$, ESA retains an estimate of $P^*$ and updates that estimate in exponential intervals. Choose $\sigma$ uniformly at random from $\Pi$. Fix $\epsilon \in (0  ,  1)$ and let $h \in \{0,\ldots, L-1\}$. To estimate the optimal value using $(f_{\s(t)}, A_{\s(t)})$, $1\leq t\leq 2^h n\epsilon$, consider the following optimization problem:

\begin{align}\label{Ph}
P_h &= \sup_{x_t \in \mathbb{R}^{k}\;\; t \in \left[2^h n \e\right ]}{  \quad\frac{1}{2^{h}\epsilon(1-\theta_{h})} \sum_{t=1}^{2^h n\epsilon} {f_{\sigma(t)}(x_t)}+\psi\left(\frac{1}{2^{h} \epsilon(1+\theta_{h})} \sum_{t=1}^{2^h n\epsilon}{A_{\sigma(t)} x_t }\right)}\\ \notag
&\leq \inf_{y \in \mathbb{R}^m}{  \quad \frac{1}{2^{h} \epsilon(1-\theta_{h})}\sum_{t=1}^{2^h n\epsilon} {-f^*_{\sigma(t)}\left( A_{\sigma(t)}^{\top} y\right)} - \psi^*\left(-\frac{1+\theta_h}{1-\theta_h}y \right)},
\end{align}

\noindent where $\theta_h = 2^{-\frac{h+1}{2}}\epsilon^{\frac{1}{2}}$. In order to compare $P_h$ with $P^*$, we define:
\begin{align}\notag
\tilde{P}_h &= {  \frac{1}{2^{h}\epsilon(1-\theta_{h})} \sum_{t=1}^{2^h n\epsilon} {f_{\sigma(t)}(x^*_{\sigma(t)})}+\psi\left(\frac{1}{2^h\epsilon(1+\theta_{h})} \sum_{t=1}^{2^h n\epsilon}{A_{\sigma(t)} x^*_{\sigma(t)} }\right)},\\ \notag
\tilde{D}_h&= {   \frac{1}{2^h\epsilon(1-\theta_{h})}\sum_{t=1}^{2^h n\epsilon} {-f^*_{\sigma(t)}\left(A_{\sigma(t)}^{\top} y^*\right)} - \psi^*\left(-\frac{1+\theta_h}{1-\theta_h} y^* \right)}.
\end{align}

Note that $\tilde{D}_h \geq P_h \geq \tilde{P}_h$. Now using the previous fact alongside Bernstein's inequality, we show that $P_h$ is close to $P^*$ with high probability. 

\begin{align}\notag
\mathbb{P}\left({P_h}< P^*\right) &\leq \mathbb{P}\left(\tilde{P}_h <  P^*\right) \\\notag
&\leq \mathbb{P}\left( \sum_{t=1}^{2^h n\epsilon} {f_{\sigma(t)}(x^*_{\sigma(t)})} < 2^h\epsilon (1-\theta_{h}) P^* \right) + \sum_{i=1}^{m}{\mathbb{P}\left(\sum_{t=1}^{2^h n\epsilon}{(A_{\sigma(t)} x^*_{\sigma(t)})_i } > 2^h \epsilon(1+\theta_{h}) b_i \right)}\\\label{Ph-bound}
&\leq \exp{\left(\frac{-\epsilon^2}{4 \gamma}\right)} + m \exp{\left(\frac{-\epsilon^2}{6 \gamma}\right)},
\end{align}

\begin{align}\notag
\mathbb{P}\left(P_h >\frac{1+2 \theta_{h}}{1-\theta_{h}}  P^*\right) &\leq \mathbb{P}\left(\tilde{D}_h> \frac{1+ 2 \theta_{h}}{1-\theta_{h}}   P^*\right) \\\label{Dh-bound}
&\leq \mathbb{P}\left(  \sum_{t=1}^{2^h n\epsilon} {- f^*_{\sigma(t)}(A_{\sigma(t)}^{\top} y^*)} > -2^h\epsilon (1+\theta_{h}) \sum_{t=1}^{n} {f^*_{\sigma(t)}(A_{\sigma(t)}^{\top} y^*)} + 2^h \e \theta_h P^* \right) \leq \exp{\left(\frac{-\epsilon^2}{6\gamma}\right)}.
\end{align}

Let $\k$ be defined as in section~\ref{sec::simple} and let $\eta: \{n\e+1, n\e+2, \ldots, n\} \mapsto [L]$ be defined as $\eta(t) = \k(n-t+1)$. Equivalently, $\eta(t) = h$  if and only if $n 2^{h-1}\epsilon < t \leq  n 2^{h}\epsilon$ with $h \in [L]$. For all $t \in \{n\e+1, n\e+2, \ldots, n\}$, let $q_t = P_{\eta{(t)}-1}$ if $P_{\eta{(t)}-1} \neq 0$. To avoid division by zero, we set $q_t = \e$ if $P_{\eta{(t)}-1} = 0$. Note that since $P^* >0$, the probability of $P_{\eta{(t)}-1} = 0$ is bounded by \eqref{Ph-bound}. Now we define parameters that will be used in ESA. For all $t\in \{n\e+1, n\e+2,  \ldots, n(1-\e)\}$, define:

\begin{align*}
&\alpha_t =  \frac{1-2^{-\frac{\eta(t)}{2}}\epsilon^{\frac{1}{2}}}{1+2^{-\frac{\eta(t)}{2}+1}\epsilon^{\frac{1}{2}}}, \; \beta_t=\frac{\e}{\gamma n}\left(1+2^{-\frac{\k(t)}{2}}\epsilon^{\frac{1}{2}}\right),\;\beta'_t = \frac{\e}{\gamma n}\left(1-2^{-\frac{\k(t)}{2}}\epsilon^{\frac{1}{2}}\right)\alpha_t,\\
\end{align*}

\noindent while for all $t \in \{n(1-\e)+1, \ldots, n+1\}$, let $\beta_t = \beta_{n(1-\e)}$ and $\beta'_t = \beta'_{n(1-\e)}$.

\begin{algorithm}
\caption{Exponentiated Subgradient Algorithm (ESA)}\label{general-algorithm}
\begin{algorithmic}
\REQUIRE{$\epsilon \in (0,1)$, $\gamma > 0$}

\STATE{$\nu = \frac{1}{\gamma}\log{\left({1+\epsilon}\right)}$, $\nu\rq{} = \frac{-1}{\gamma}\log{\left({1-\epsilon}\right)}$}

\FOR{$t = 1:n\e$}   

\STATE{$\check{x}_t = 0$}

\ENDFOR

\STATE{$y'_{n\e+1} = \exp{(\beta'_{n\e+1})}$, $y_{n\e+1,i}= \exp{(-\beta_{n\e+1})}/m \quad \left(\forall i\in [m]\right)$}

\FOR{$t = n \epsilon + 1 :  n$}   

\STATE{$x_t \in \argmin_{z}{\sum\limits_{i=1}^{m}{\frac{y_{t,i}}{ b_i}  \left(A_{\sigma\left(t\right)}z\right)_i } -\frac{y'_t}{q_t}  f_{\sigma(t)}\left(z\right) }$}
\STATE{$   y_{t+1,i} = y_{t,i} \exp{\left( \frac{\nu}{b_i}\left(A_{\sigma\left(t\right)}x_{t}\right)_i  -   \beta_{t+1}\right)} \quad \left(\forall i \in [m]\right) $}
\STATE{$        y'_{t+1} = y'_{t} \exp{\left(\frac{-\nu'}{q_{t}} f_{\sigma(t)}\left(x_{t}\right) + \beta'_{t+1}\right)} $}

\IF{$\sum\limits_{s=n\e+1}^{t}A_{\sigma\left(s\right)}x_s\leq b$} 
\STATE{$\check{x}_t = x_t$}
\ELSE
\STATE{$\check{x}_t = \mathbf{0}$}
\ENDIF

\ENDFOR
\end{algorithmic}
\end{algorithm}
Algorithm \eqref{general-algorithm} describes ESA applied to \eqref{vector_general}. In section \ref{sec::connection}, we represent each update of $\left(y_t,y'_t\right)$ as an exponentiated subgradient step. Let $x_t, 1\leq t\leq n$ be given by algorithm \eqref{general-algorithm}.  For all $t \in [n]$, define:


\begin{align*}
y''_t = \frac{q_t}{ y'_t} \left[{y_{t,1}}/{ b_1 },\,{y_{t,2}}/{ b_2 },\, \ldots, {y_{t,m}}/{ b_m }\right]^{\top}.
\end{align*}


Note that $x_t \in \argmin {y''_t}^{\top} A_{\sigma\left(t\right)}z  - f_{\sigma(t)}\left(z\right) $ if and only if $  A_{\sigma\left(t\right)}^{\top} y''_t \in \partial f_{\sigma(t)}\left(x_t\right) $ if and only if $x_t \in \partial f^*_{\sigma(t)}\left(  A_{\sigma\left(t\right)}^{\top} y''_t\right) $. For special cases of $f_{\s(t)}$, this assignment rule can have a simple form; for example, if $k=1$ and $f_{\s(t)}(x) = \tilde{f}(x) + I_{x\in [0,1]}$ with $\tilde{f}$ differentiable, then $x_t$ can be chosen as:

\begin{equation}\label{assignment} \notag
{x}_t =   \left\{ \begin{array}{l c}  0 &    \tilde{f}'(0) \leq A_{\s(t)}^{\top} y''_t ,\\ 1 &  \tilde{f}'(1) >  A_{\s(t)}^{\top} y''_t, \\ l(A_{\s(t)}^{\top} y''_t) & \text{otherwise},\end{array} \right.
\end{equation}

\noindent where $l(v) = \min{\{x |v\geq  \tilde{f}'(x)\}}$. Similar to the analysis in section~\ref{sec::simple}, we need to derive concentration inequalities for the running sums of form $\sum_{s=t}^{n}\left(A_{\sigma(s)}x^*_{\sigma(s)}\right)_i$ and $\sum_{s=t}^{n}f_{\sigma(s)}\left(x^*_{\sigma(s)}\right)$. Define:

\begin{align*}
\forall i\in [m], \, \forall t \in [n]&: \quad  {X_t^*}^i = \left(A_{\sigma(t)}x^*_{\sigma(t)}\right)_i,\\
\forall i\in [m],\,\forall t\in [n]&: \quad R_t^i = \frac{\sum_{s=t}^{n}{X_s^*}^i }{n-t+1} - \frac{b_i}{n} ,\\
\forall t\in [n]&: \quad S_t = \frac{\sum_{s=t}^{n}f_{\sigma(s)}\left(x^*_{\sigma(s)}\right)}{n-t+1} - \frac{P^*}{n}, \\
\forall t \in [n]&: \quad C_t = \left\{\sigma\left\lvert S_t\geq  -\frac{1}{n}{P^*2^{-\frac{\k(t)}{2}}\epsilon^{\frac{1}{2}}}\right.\right\},\\
\forall t \in [n]&: \quad B_t = \bigcap_{i=1}^{m}\left\{\s\left\lvert R_t^i\leq \frac{1}{n}b_i 2^{-\frac{\k(t)}{2}}\epsilon^{\frac{1}{2}}\right.\right\},\\
\forall i \in [m], \; \forall k \in [L]&: \quad M^i_k = \max_{n (1-2^{k}\epsilon ) < t \leq n \left(1-2^{k-1}\epsilon\right)}R_t^i,\\
\forall k \in [L]&: \quad N_k = \max_{n (1-2^{k}\epsilon ) < t \leq n \left(1-2^{k-1}\epsilon\right)} - S_t.\\
\end{align*}

As in section \ref{sec::simple}, for all $i$, $R_t^i$ is a martingale. Similarly, it can be shown that $S_t$ is a martingale. By Doob's maximal inequality and Bernstein's inequality:

\begin{align}\notag
\forall i\in [m],\;\forall k \in  [L]: \quad &\mathbb{P}\left(M^i_k > \frac{1}{n}{b_i 2^{-\frac{k}{2}}\epsilon^{\frac{1}{2}}}\right) \leq \inf_{r \geq 0}{E\left[\exp{\left(r R_{n(1-2^{k-1}\epsilon)}^i\right)}\right]}{\exp{\left(-\frac{1}{n}{r b_i 2^{-\frac{k}{2}}\epsilon^{\frac{1}{2}}}\right)}} \leq  \exp{\left(\frac{-\epsilon^2}{6\gamma }\right)}\\\label{m-bound}
\Rightarrow \quad &\mathbb{P}\left(\bigcup\limits_{t=1}^{n(1-\epsilon)} \bar{B}_t\right) \leq \sum_{k=1}^{L} \mathbb{P}\left(\bigcup\limits_{n(1-2^{k}\epsilon)}^{ n(1-2^{k-1}\epsilon)}\bar{B}_t\right)= \sum_{k=1}^{L} \mathbb{P}\left(\max_i \frac{M^i_k}{b_i} > \frac{1}{n}{ 2^{-\frac{k}{2}}\epsilon^{\frac{1}{2}}}\right) \leq  m L\exp{\left(\frac{-\epsilon^2}{6 \gamma }\right)}.
\end{align}

\begin{align}\notag
 \forall k \in [L]:\quad &\mathbb{P}\left(N_k> \frac{1}{n}{P^* 2^{-\frac{k}{2}}\epsilon^{\frac{1}{2}}}\right) \leq \inf_{r \geq 0}{E\left[\exp{\left(-r S_{n(1-2^{k-1 }\epsilon)}\right)}\right]}{\exp{\left(-\frac{1}{n}{r P^*2^{-\frac{k}{2}}\epsilon^{\frac{1}{2}}}\right)}} \leq \exp{\left(\frac{-\epsilon^2}{4 \gamma }\right)}\\\label{n-bound}
\Rightarrow \quad &\mathbb{P}\left(\bigcup\limits_{t=1}^{n(1-\epsilon)} \bar{C}_t\right)  \leq \sum_{k=1}^{L} \mathbb{P}\left(\bigcup\limits_{n(1-2^{k}\epsilon)}^{ n(1-2^{k-1}\epsilon)}\bar{C}_t\right) =  \sum_{k=1}^{L} \mathbb{P}\left(N_k> \frac{1}{n}{P^* 2^{-\frac{k}{2}}\epsilon^{\frac{1}{2}}}\right) \leq L \exp{\left(\frac{-\epsilon^2}{4\gamma }\right)}. 
\end{align}

\begin{thm}\label{General-thm}
Suppose $\check{x}_t$, $1\leq t \leq n$ is given by algorithm \eqref{general-algorithm}.  If  $\e \leq \frac{1}{12}$ and $\gamma \leq \frac{\epsilon^2}{13 \log\left(m/\epsilon\right)}$, then:

\begin{align}
E\left[\sum_{t=1}^{n} {f_t(\check{x}_t)}+\psi\left(\sum_{t=1}^{n}{A_t \check{x}_t }\right)\right] \geq (1-12\e) P^*.
\end{align}

\end{thm}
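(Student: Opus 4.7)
The plan is to extend the super-martingale argument of Theorem~\ref{thm1} so that it simultaneously controls budget consumption and accumulated utility. I would define a composite potential
\[
\Phi^{t} = \Bigl(\sum_{i=1}^{m} y_{t+1,i} + y'_{t+1}\Bigr)\prod_{s=n\epsilon+1}^{t}1_{B_s\cap C_s\cap E_s},
\]
where $E_s$ is the event that the estimate $q_s=P_{\eta(s)-1}$ lies in $\bigl[P^{*},\tfrac{1+2\theta_{\eta(s)-1}}{1-\theta_{\eta(s)-1}}P^{*}\bigr]$; by \eqref{Ph-bound} and \eqref{Dh-bound} this event holds with high probability. The first goal is to show $\{\Phi^t,\mathcal{A}_t\}_{t\geq n\epsilon}$ is a super-martingale; a Markov-type argument on $\Phi^{n}$ will then translate the smallness of $\Phi^{n}$ into simultaneous control of $\sum_{t}A_{\sigma(t)} x_t$ (via the $y_{t,i}$ summands) and $\sum_{t}f_{\sigma(t)}(x_t)$ (via the $y'_t$ summand).

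The super-martingale check mirrors Section~\ref{sec::simple}. The elementary bound $e^{\nu u}\leq 1+u(e^{\nu}-1)$ applied on $[0,\gamma]$ with the prescribed $\nu,\nu'$ linearizes the multiplicative updates to
\[
y_{t+1,i}\leq y_{t,i}\bigl(1+\tfrac{\epsilon}{\gamma}(A_{\sigma(t)} x_t)_i/b_i\bigr)e^{-\beta_{t+1}},\qquad y'_{t+1}\leq y'_t\bigl(1-\tfrac{\epsilon}{\gamma}f_{\sigma(t)}(x_t)/q_t\bigr)e^{\beta'_{t+1}}.
\]
Because $x_t$ is the joint minimizer of $\sum_i(y_{t,i}/b_i)(A_{\sigma(t)}z)_i - (y'_t/q_t)f_{\sigma(t)}(z)$, and because $e^{-\beta_{t+1}}\leq 1$ while $e^{\beta'_{t+1}}\geq 1$ push each sign the favorable direction, I can substitute $x^{*}_{\sigma(t)}$ for $x_t$ on the linearized drift. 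Taking the conditional expectation under the permutation and restricting to $B_t\cap C_t\cap E_t$ replaces $E[(A_{\sigma(t)}x^*_{\sigma(t)})_i/b_i\mid\mathcal{A}_{t-1}]$ by at most $\tfrac{1}{n}(1+2^{-\kappa(t)/2}\epsilon^{1/2})$ and $E[f_{\sigma(t)}(x^*_{\sigma(t)})/q_t\mid\mathcal{A}_{t-1}]$ by at least $\tfrac{\alpha_t}{n}(1-2^{-\kappa(t)/2}\epsilon^{1/2})$. The calibrations of $\beta_{t+1}$ and $\beta'_{t+1}$ are designed precisely so that the resulting drift is non-positive, with $\alpha_t$ absorbing the $P^{*}/q_t$ mismatch; the computation parallels \eqref{el1} from the proof of Theorem~\ref{thm1}.

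With the super-martingale established, Markov's inequality applied to $\Phi^{n}$, combined with the tail bounds \eqref{Ph-bound}, \eqref{Dh-bound}, \eqref{m-bound} and \eqref{n-bound}, will show that with probability at least $1-O(\epsilon)$, simultaneously,
\[
\max_{i}\sum_{t=n\epsilon+1}^{n}(A_{\sigma(t)}x_t)_i\leq(1+O(\epsilon))b_i\qquad\text{and}\qquad\sum_{t=n\epsilon+1}^{n}f_{\sigma(t)}(x_t)\geq(1-O(\epsilon))P^{*}.
\]
On this good event the budget test in Algorithm~\ref{general-algorithm} never triggers, so $\check{x}_t=x_t$ for every $t>n\epsilon$; the only deterministic loss is the warm-up window $\sum_{t\leq n\epsilon}f_{\sigma(t)}(\check{x}_t)=0$, which has expected gap $\epsilon P^{*}$ from the full sum by exchangeability. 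On the complementary bad event, of probability $O(\epsilon)$, I lower-bound the achieved objective by $0$, losing an additional $O(\epsilon)P^{*}$. Adding the various $O(\epsilon)$ slacks yields the claimed $(1-12\epsilon)P^{*}$ bound.

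The main obstacle will be carrying out the super-martingale computation cleanly: the budget potential $\sum_i y_{t+1,i}$ and the utility potential $y'_{t+1}$ must be analyzed jointly, since $x_t$ is optimal only for their combined drift, not for either piece in isolation. A secondary technical point is the $q_t\neq P^{*}$ mismatch; the multiplicative factor $\alpha_t$ in $\beta'_t$ exists exactly to absorb this, and the event $E_s$ must be folded into the indicator product with probability cost controlled by \eqref{Ph-bound}--\eqref{Dh-bound}. Everything after the super-martingale and union-bound is routine bookkeeping of constants.
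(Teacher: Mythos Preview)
Your overall architecture matches the paper's: a composite potential tracking both resource usage and accumulated utility, shown to be a super-martingale on a high-probability ``good'' event, followed by Markov's inequality and the union bounds \eqref{Ph-bound}--\eqref{n-bound}. Two points, however, do not go through as written.

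\textbf{The budget-test step is a genuine gap.} You conclude that on the good event $\max_i\sum_{t>n\epsilon}(A_{\sigma(t)}x_t)_i\leq(1+O(\epsilon))b_i$, and then assert that the budget test never triggers, so $\check{x}_t=x_t$. But $(1+O(\epsilon))b_i>b_i$, so the running sum of the $x_t$'s can exceed $b$; and once it does, \emph{every} subsequent $\check{x}_t$ is set to zero (the sum of the $x_s$'s is nondecreasing, so the test fails permanently). There is then no direct way to lower-bound $\sum_t f_{\sigma(t)}(\check{x}_t)$ from the utility bound on $\sum_t f_{\sigma(t)}(x_t)$. The paper avoids this by truncating the analysis at $T=n(1-2\epsilon)$ rather than $n$: with $T-n\epsilon=n(1-3\epsilon)$ steps, the accumulated $\sum_{s}\beta_s$ picks up a $(1-3\epsilon)$ factor, and the Markov argument yields $\max_i\sum_{s\leq T}(A_{\sigma(s)}x_s)_i/b_i\leq(1-3\epsilon)(1+3\epsilon)<1$ on the good event. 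Thus the budget test does not fire up to time $T$, so $\check{x}_t=x_t$ for $t\le T$, and the utility lower bound transfers. (Running the potential to $\Phi^n$ is also problematic for a second reason: the events $B_t,C_t$ are defined via $\kappa(t)$, which is only available for $t\le n(1-\epsilon)$, so the martingale drift cannot be controlled beyond that point.)

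\textbf{A secondary technical point on the substitution.} Your potential $\sum_i y_{t+1,i}+y'_{t+1}$ does not yield an exact super-martingale, and the ``$e^{-\beta_{t+1}}\le1$, $e^{\beta'_{t+1}}\ge1$ push in the favorable direction'' argument does not repair it. After you linearize and strip the factors $e^{\pm\beta_{t+1}}$ to invoke the minimizer property, you cannot re-insert them: the constant part of the drift becomes $e^{-\beta_{t+1}}\sum_i y_{t,i}+e^{\beta'_{t+1}}y'_t$, and you end up needing $e^{-\beta_{t+1}}+\beta_t\le1$ and $e^{\beta'_{t+1}}-\beta'_t\le1$, both of which fail. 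The paper's potential $\sum_i\phi_i^t+m\chi^t$ is weighted precisely so that the linearized coefficient in front of $(A_{\sigma(t)}z)_i/b_i$ equals $m y_{t,i}$ and the coefficient in front of $f_{\sigma(t)}(z)/q_t$ equals $m y'_t$ (this is why $\chi^t$ carries the extra factor $m$, compensating the $1/m$ in the initialization of $y_{t,i}$). With those matched coefficients the substitution $x_t\to x^*_{\sigma(t)}$ is exact and no ``favorable direction'' reasoning is needed.
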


\begin{remark}
The conclusion of the theorem can be restated as:

\begin{align}
C_{\mathcal{D}_2} \geq 1 - 12 \e,
\end{align}
where $\mathcal{H}$ is the set of all $\left(\left(f_1,A_1\right), \left(f_2,A_2\right),\ldots, \left(f_n,A_n\right)\right) \in \left(\mathcal{G} \times \mathbb{R}_{+}^{m\times k}\right)^n$ that satisfy~\eqref{gamma} with $\gamma = \frac{\epsilon^2}{13 \log\left(m/\epsilon\right)}$, and with $P^* > 0$.
\end{remark}
\begin{proof}

Here we state the main steps of the proof. The details are presented in appendix A. Let $T= n(1-2\epsilon)$. For all $t \in \{n\e+1,n\e+2, \ldots,T\}$, define:

\begin{align*}
&\quad F_t = {C_t}\cap {B_{t}}\cap{\left\{P^* \leq q_t \leq P^*/\alpha_t\right\}},\\
\forall i \in [m]:&\quad\phi_i ^t=  \exp{\left( \sum_{s=n\e+1}^{t} \frac{\nu}{b_i}\left(A_{\sigma(s)}x_s\right)_i -  \beta_s  \right)},\\
& \quad \chi ^t= \exp{\left(  \sum_{s=n\e+1}^{t}-\frac{\nu' }{q_{s}} f_{\sigma(s)}(x_s) + \beta'_s\right)},\\
&\quad  \Phi^t = \left(\sum_{i=1}^{m} {\phi_i^t}+ m \chi^t\right)\prod_{s=n\e+1}^{t} 1_{F_s}.
\end{align*}

For all $i \in [m]$, define $\phi_i^{n\e} = 1$, $\chi^{n\e} = 1 $, and $\Phi^{n\e} = \sum_{i=1}^{m} \phi_i^{n\e}+m\chi^{n\e} = 2 m$. Let $\mathcal{A}_{n\e} = \{\varnothing, \Pi \}$, and for all $t \in \{n\e+1,n\e+2, \ldots,T\}$, let $\mathcal{A}_t$ be the sigma algebra generated by $\sigma(1),\ldots, \sigma(t)$.  We show in appendix A that $\left\{ \Phi_t, {\cal A}_t\right\}$, $t \in \{n\e, n\e+1, \ldots, T\}$, is a super-martingale, which yields:

\begin{align}
E\left[{\Phi^{T}} \right] \leq  E\left[{\Phi^{n\e}} \right]  = 2 m.
\end{align}

Let $B =\left\{\frac{1}{P^*}{\sum_{s=n\epsilon+1}^{T}f_{\sigma(s)}(x_s)} < \left(1-11\epsilon\right)\right\} $, $C =\left\{\max_i \frac{1}{b_i}{\sum_{s=n\epsilon+1}^{T}\left(A_{\sigma(s)}x_s\right)_i}> 1\right\} $, and $F =  \cap_{s=1}^{T} {F}_s$.  Using Markov's inequality, we can derive:

\def \chiB {\exp{\left(-\frac{\e^2}{7\gamma}\right)}} 
\def \phiB {\exp{\left(-\frac{\e^2}{4\gamma } \right)}}

\begin{align*}
 \mathbb{P}\left( B \cap F\right) & \leq  E\left[ \chi^{T}1_F\right]\chiB,\\
 \mathbb{P}\left( C \cap F\right) & \leq  E\left[\sum_{i=1}^{m} \phi_i^{T}1_F\right]\phiB.
\end{align*}

Combining the previous bounds with \eqref{Ph-bound}, \eqref{Dh-bound}, \eqref{m-bound} and \eqref{n-bound}, we get: 

\begin{align*}
\mathbb{P}\left( B \cup C\right)\leq \mathbb{P}\left( B \cap F\right) + \mathbb{P}\left( C \cap F\right) + \mathbb{P}\left(\bar{F}\right)& \leq  E\left[ \chi^{T}1_F\right]\chiB +  E\left[\sum_{i=1}^{m} \phi_i^{T}1_F\right]\phiB\\
 &+ 2 L  \exp{\left(\frac{-\epsilon^2}{4\gamma}\right)}+(2m+1) L \exp{\left(\frac{-\epsilon^2}{6\gamma}\right)}\\
&\leq E\left[ \Phi^{T}\right]\chiB+ 2 L  \exp{\left(\frac{-\epsilon^2}{4\gamma}\right)}+(2m+1) L \exp{\left(\frac{-\epsilon^2}{6\gamma}\right)}\\
&\leq 2 m\chiB +  2 L \exp{\left(\frac{-\epsilon^2}{4\gamma}\right)}+(2m+1) L\exp{\left(\frac{-\epsilon^2}{6\gamma}\right)}\leq \e,
\end{align*}

\noindent where in the last line we used the fact that $L = \log_2(\frac{1}{\e})\leq \log_2(e) \e^{-\frac{1}{e}}$ alongside the assumption that $\e \leq \frac{1}{12}$. As a result, we get:

\begin{align*}
E\left[\sum_{t=1}^{n} {f_t(\check{x}_t)}+\psi\left(\sum_{t=1}^{n}{A_t \check{x}_t }\right)\right] &\geq E\left[\left.\sum_{t=n\e+1}^{T} {f_t(x_t)}+\psi\left(\sum_{t=n\e+1}^{T}{A_t x_t }\right)\right\lvert\bar{B}\cap \bar{C}\right] \mathbb{P}\left(\bar{B}\cap \bar{C}\right)\\
&\geq (1-11\e)(1-\e)P^* \geq (1-12\e) P^*.
\end{align*}
\end{proof}

\subsection{Connection with subgradient methods}\label{sec::connection}
In this section, we discuss how the updates of $(y_t,y'_t)$ in algorithm~\eqref{general-algorithm} are equivalent to online exponentiated subgradient steps and point to the similarities and differences between ESA, DLA and KRTV. Recall that a pair $\left(\left(x^*_1,x^*_2,\ldots,x^*_n\right),y^*\right)$ is an optimal primal-dual pair for~\eqref{vector_general} if and only if:

\begin{align} 
&  y^* \in  -\partial \psi\left(\sum_{t=1}^{n}{A_t x_t^* } \right),\\  \notag
\\ 
&\forall t \in [n]: \quad A_t^\top y^* \in  \partial{f_t}\left(x_t^*\right). \label{cond_g_2_2} 
\end{align}

Note that \eqref{cond_g_2_2} is decoupled in $t$. Therefore, to assign $x_t$, an online algorithm can find $y_t$, an estimate of $y^*$, and then choose $x_t$ such that:

\begin{align} \label{update_rule}
 x_t \in  \partial{f^*_{\s(t)}}\left( A_{\s(t)}^{\top} y_t\right)  \leftrightarrow  A_{\s(t)}^{\top} y_t \in  \partial{f_{\s(t)}}\left(x_t\right).
\end{align}

ESA and DLA are two examples of online algorithms that choose $x_t$ according to \eqref{update_rule}. KRTV can also be transformed into an algorithm that chooses $x_t$ according to \eqref{update_rule} (Both DLA and KRTV are proposed for online LP. Here we consider extensions of those algorithms to \eqref{vector_general}). However, these algorithms update $y_t$ differently. 
When $t = n \e 2^h$ for some $h \in\{0,1,\ldots,L-1\}$, DLA sets  $y_t$ to be:


\begin{align}\label{dual_estimate}
y_t\in\argmin_{y \in \mathbb{R}^m} {\quad \frac{1}{2^{h} \epsilon(1-2^{-\frac{h}{2}}\e^{\frac{1}{2}})} \sum_{s=1}^{t} {-f^*_{\sigma(s)}\left(A_{\sigma(s)}^{\top} y\right)} - \psi^*\left(-y \right)},\\ \notag
\end{align}

\noindent then for all $t \in \{n\e 2^h+1,n\e 2^h+2, \ldots, n\e2^{h+1}\}$ sets $y_t = y_{n \e 2^h}$. Although the dual variables do not explicitly appear in the description of KRTV in \cite{kesselheimRTV13}, this algorithm can be represented as an extreme case that updates $y_t$ by solving~\eqref{dual_estimate} (without $(1-2^{-\frac{h}{2}}\e^{\frac{1}{2}})$) for every $t \in [n]$. ESA also updates $y_t$  for every $t \in [n]$. However, the updates are very simple and computationally very cheap. Consider the following problem which is dual to \eqref{vector_general_equiv}:

  
\begin{align}\tag{$\mathcal{D}2$}\label{vector_general_equiv_dual}
\underset{y \in \mathbb{R}^m,\, y'\in \mathbb{R}}{\mbox{minimize}}&{\quad   - \psi^*\left(-y \right) -\hat{\psi}^*\left(-y' \right) + \sum_{t=1}^{n} {\sup_{x \in \mathbb{R}^k}{y' f_t(x)- y^\top A_t x}}}. \\ \notag
\end{align}

Note that $\psi^*\left(u\right) = b^T u$ and $\hat{\psi}^*\left(v\right) = - P^* v$ for all $u \in \mathbb{R}_{+}^m$ and $v \in \mathbb{R}_{+}$. Using the change of variables $y_i \rightarrow y_i/b_i$ and $y \rightarrow y/ P^*$, we can rewrite \eqref{vector_general_equiv_dual} as:

\begin{align}
\underset{y \in \mathbb{R}_{+}^m,\, y'\in \mathbb{R}_{+}}{\mbox{minimize}}&{\quad  \mathbf{1}^{\top} y -  y' + \sum_{t=1}^{n}{H_{t}(y,y')}},\\ \notag
\end{align}

\noindent where $H_{t}(y,y') = \sup_{x \in \mathbb{R}^k}{\frac{y'}{P^*} f_t(x)- \sum_{i=1}^{m}\frac{y_i}{b_i} \left(A_t x\right)_i}$. By replacing $P^*$ with $q_t$, we can define:
  $$\tilde{H}_{\s(t)}(y,y') = \sup_{x \in \mathbb{R}^k}{\frac{y'}{q_t} f_{\s(t)}(x)- \sum_{i=1}^{m}\frac{y_i}{b_i} \left(A_{\s(t)} x\right)_i}.$$
  
Let $x_t$, $n\e< t \leq n$ be given by ESA. Since for all $t \in \{n\e+1,n\e+2,\ldots,n\}$, $x_t \in \partial f_{\s(t)}^*(A_{\s(t)}^{\top} {y''_t})$, we get:


\begin{align}\label{subg}
\left[\begin{array}{ccccc}\frac{-1}{b_1} \left(A_{\s(t)} x_t\right)_1,&\frac{-1}{b_2} \left(A_{\s(t)} x_t\right)_2,& \ldots&\frac{-1}{b_m} \left(A_{\s(t)} x_t\right)_m,&  \frac{1}{q_t}f_{\s(t)}(x_t)\end{array}\right]^{\top} \in \partial \tilde{H}_{\s(t)}(y_t,y'_t).
\end{align}


For all $t \in \{ n\e+1, \ldots, n - 1\}$, ESA updates $(y_t,y'_t)$ via an online exponentiated step toward the negative direction of the subgradient of $\tilde{H}_{\s(t)}(y_t,y'_t)$ given in \eqref{subg}:

\begin{align*}
\forall i \in [m]:\quad  \left(y_{t+1}\right)_i &= y_t \exp{\left(\frac{\nu}{b_i}\left( A_{\s(t)} x_t\right)_i -  \beta_{t+1}\right)},\\
\quad y'_{t+1} &=  y'_t \exp{\left(-\frac{\nu'}{q_t}f_{\s(t)}(x_t)+ \beta'_{t+1}\right)},
\end{align*}

\noindent where ${\nu}$ and ${\nu'}$ are the step sizes while $\exp{\left(- \beta_{t+1}\right)}$ and $\exp{\left( \beta'_{t+1}\right)}$ are the normalization terms.
\section{Numerical experiment}

In this section, we examine the performance of ESA and compare it with that of four other online LP algorithms proposed in the literature. The algorithms are \emph{one-time learning algorithm (OLA)}, DLA, KRTV, and KRTV5. OLA is a simpler version of DLA introduced in \cite{agrawal2009dynamic}. OLA computes the dual variable at $t = n\e$ and uses the same dual variable to choose $x_t$ for all $t \in \{n\e+1,n\e+2,\ldots,n\}$. As it was discussed in section~\ref{sec::connection}, KRTV can be viewed as an algorithm that chooses $x_t$ according to \eqref{update_rule} and updates the dual variable at each step. This point of view motivated us to consider a variant of KRTV that updates the dual variable after every fixed number of steps. In KRTV5, the dual variable is updated every $5$ steps. 

Consider an online LP problem and let $M = \{\{\left(c_t, A_t\right)\lvert 1\leq t\leq n\}\}$, where the double bracket notation denotes a multiset. we generate $M$ based on the construction proposed in \cite{agrawal2009dynamic} for deriving the necessary condition on $\gamma$. In this scheme of construction, $m = 2^d$ for some integer $d$. $b = c \mathbf{1}$ for some $c \in \mathbb{R}_{++}$. For all $l \in [m]$, let $l-1 = (a_{l,1},a_{l,2},\ldots,a_{l,d})_2 $. For all $i \in [d]$, define $v_i = [a_{1,i},a_{2,i},\ldots,a_{m,i}]^T$ and $w_i = \mathbf{1} - v_i$. Let $j_1, j_2, \ldots, j_m$ be $m$ independent samples from $Binomial(\left\lceil\frac{2 c}{d}\right\rceil,\frac{1}{2})$. $M$ consists of:

\begin{enumerate}
\item $(4,v_i)$ with multiplicity $\left\lceil\frac{c}{d}\right\rceil$ for all $ i \in [d]$
\item $(3,w_i)$ with multiplicity $j_i$ for all $ i \in [d]$
\item $(2,w_i)$ with multiplicity $\left\lceil\frac{1}{2}\sqrt{\frac{c}{d}}\right\rceil$ for all $ i \in [d]$
\item $(1,w_i)$ with multiplicity $\left\lceil\frac{2 c}{d}\right\rceil - j_i$ for all $ i \in [d]$
\end{enumerate}

Table~\ref{Table1} compares the competitive ratio and the runtime of all the algorithms when $d = 3$ and $c \in \{1200,300,60,30\}$. 
The competitive ratio of the algorithms is estimated by averaging the performance for $500$ permutations. The competitive ratios (CR) reported in Table~\ref{Table1} are the average of estimated competitive ratios for 10 independently generated instances of $M$. The runtime reports the average time it takes an algorithm to provide a solution for one permutation of a single instance of $M$. For this numerical experiment, we used Matlab R2014a and the linear programming solver of Matlab optimization toolbox. The runtimes are collected from a Linux machine with an Intel Core  i7.4770 3.40GHz CPU and 8GB of RAM. In ESA, $\gamma$ is set to be equal to $\frac{1}{c}$. Note that $\e$ is as an input parameter for ESA, OLA, and DLA. For each $\e \in \{0.1,0.05\}$, we have considered two problems with $c = \frac{d}{\e^2}$ and $c = \frac{d}{\e}$. When $c = \frac{d}{\e^2}$, the bid-to-budget ratio matches the necessary condition for achieving a competitive ratio of $1 - O(\e)$. As the data presented in Table~\ref{Table1} suggest, ESA, OLA, and DLA are multiple orders of magnitude faster than KRTV and  KRTV5. When $c \in \{300,60,30\}$, ESA with $\e = 0.05$ achieves a competitive ratio that is higher than or equal to the rest of the algorithms. Note that ESA, OLA, and DLA choose $x_t = 0$ for $t \in [n\e]$. Despite this fact, when $\e = 0.05$ and $c = \frac{d}{0.05^2} = 1200$, the difference between the competitive ratio of ESA and KRTV is less than $\e$.

\begin{table}
\caption{Comparison between OLA, DLA, ESA, KRTV, KRTV5 when $c \in \{1200,300,60,30\}$}\label{Table1}
\begin{center}
\begin{tabular}{ll  l l   l l  l l  l l}
\toprule
 & & \multicolumn{2}{c}{ $n = 3630, c =  1200$}&\multicolumn{2}{c}{$n = 915, c = 300$}& \multicolumn{2}{c}{ $n = 189, c = 60$}& \multicolumn{2}{c}{ $n = 96, c = 30$}  \\
\cmidrule(r){3-4}\cmidrule(r){5-6}\cmidrule(r){7-8}\cmidrule(r){9-10}
             Algorithm   &   $\e$    & CR      &time (s)    &CR  & time (s)  &CR &time (s)    & CR  & time (s)\\
\cmidrule(r){1-2}\cmidrule(r){3-4}\cmidrule(r){5-6}\cmidrule(r){7-8}\cmidrule(r){9-10}
\multirow{2}{*}{OLA}   & 0.1             &  0.88 &    0.060    & 0.79& 0.020	 &  0.72&  0.012   & 0.65  & 0.011 \\
                                      & 0.05            &  0.91&    0.064    & 0.83&	0.020  & 0.69&  0.022 & 0.67  &   0.011\\
\cmidrule(r){1-2}\cmidrule(r){3-4}\cmidrule(r){5-6}\cmidrule(r){7-8}\cmidrule(r){9-10}
\multirow{2}{*}{DLA}  & 0.1               &  0.89&      0.112   &  0.74&  0.05&  0.79&    0.036    &0.76    &  0.033 \\
			     & 0.05             &  0.94&      0.148 &  0.92 &  0.065  &  0.84& 0.081 & 0.80   &    0.043\\
\cmidrule(r){1-2}\cmidrule(r){3-4}\cmidrule(r){5-6}\cmidrule(r){7-8}\cmidrule(r){9-10}
\multirow{2}{*}{ESA}  & 0.1              &  0.86 &   0.173   &  0.87 & 0.068  &  0.90 & 0.041  & 0.86   &   0.036\\
                                     & 0.05            &  0.93 &   0.205    &0.95   &  0.084 &  0.92& 0.093 &  0.87  &     0.046  \\
\cmidrule(r){1-2}\cmidrule(r){3-4}\cmidrule(r){5-6}\cmidrule(r){7-8}\cmidrule(r){9-10}
KRTV                              &n/a              &  0.97&132.636    &0.93  &17.485 &  0.88&4.272 &   0.84 &    1.125\\ 
\cmidrule(r){1-2}\cmidrule(r){3-4}\cmidrule(r){5-6}\cmidrule(r){7-8}\cmidrule(r){9-10}
KRTV5                          &n/a                &  0.98&  27.828     & 0.94 &3.493  & 0.90& 0.831 &   0.87 &   0.221\\
\bottomrule
\end{tabular}
\end{center}
\end{table}

%
%

\section*{Appendix A: Details for the proof of Theorem \ref{General-thm}}\label{AppA} 

First, we show that $\left\{ \Phi_t, {\cal A}_t\right\}$, $t \in \{n\e, n\e+1, \ldots, T\}$ is a super-martingale.

\begin{align*}
E\left[\left. {\Phi^{t}}\right \lvert {\cal A}_{t-1} \right] &= E\left[\left. \left(\sum_{i=1}^{m} {\phi_i^t}+m \chi^t\right)\prod_{s=n\e+1}^{t} 1_{F_s}\right \lvert {\cal A}_{t-1} \right]\\
&=E\left[\left. \left(\sum_{i=1}^{m} {\phi_i^{t-1}}\exp{\left(\frac{\nu}{b_i} \left(A_{\sigma(t)}x_{t}\right)_i - \beta_t \right)}+m \chi^{t-1} \exp{\left(-\frac{\nu'}{q_t} f_{\sigma(t)}\left(x_{t}\right)+ \beta'_t \right)}\right)  \prod_{s=n\e+1}^{t} 1_{F_s}\right \lvert {\cal A}_{t-1} \right]\\
&\leq E\left[\left. \left(\sum_{i=1}^{m} {\phi_i^{t-1}}\left(1+\frac{ \e }{\gamma b_i}\left(A_{\sigma(t)}x_{t}\right)_i \right)\exp{\left( - \beta_t \right)}+  m\chi^{t-1} \left(1-\frac{\e}{\gamma q_t} f_{\sigma(t)}\left(x_{t}\right) \right) \exp{\left( \beta'_t \right)}\right)  \prod_{s=n\e+1}^{t} 1_{F_s}\right \lvert {\cal A}_{t-1} \right]\\
&\leq E\left[\left. \left(\sum_{i=1}^{m} {\phi_i^{t-1}}\left(1+\frac{ \e}{\gamma b_i}  \left(A_{\sigma(t)}x^*_{\sigma(t)}\right)_i\right)\exp{\left( - \beta_t \right)}+ m\chi^{t-1}  \left(1-\frac{\e }{\gamma q_t}f_{\sigma(t)}\left(x^*_{\sigma(t)}\right) \right) \exp{\left( \beta'_t \right)}\right)   \prod_{s=n\e+1}^{t}1_{F_s}\right \lvert {\cal A}_{t-1} \right]\\
 &=\sum_{i=1}^{m} {\phi_i^{t-1}} E\left[\left. \left(1+  \frac{\e \left(A_{\sigma(t)}x^*_{\s(t)}\right)_i }{\gamma b_i} \right)\exp{\left(-\beta_t \right)}  \right \lvert {\cal A}_{t-1} \right] \prod_{s=n\e+1}^{t} 1_{F_s}\\
&\phantom{111}+ m \chi^{t-1} E\left[\left. \left(1 - \frac{\e f_{\sigma(t)}\left(x^*_{\sigma(t)}\right)}{\gamma q_t   }\right)\exp{\left(\beta'_t \right)}  \right \lvert {\cal A}_{t-1} \right]\prod_{s=n\e+1}^{t} 1_{F_s}\\
 &=\sum_{i=1}^{m} {\phi_i^{t-1}}  \left(1+ \frac{\e}{\gamma b_i}{\left(R_t^i +  b_i /n\right)} \right)\exp{\left(-\beta_t \right)} \prod_{s=n\e+1}^{t} 1_{F_s}  +m\chi^{t-1}  \left(1 - \frac{\e }{\gamma q_t}{\left(S_t +  P^*/n\right)}\right)\exp{\left(\beta'_t \right)}\prod_{s=n\e+1}^{t} 1_{F_s}  \\
&\leq \left(\phi^{t-1} \exp{  \left({\beta_t} - {\beta_t}\right)}+m\chi^{t-1} \exp{ \left(-{\beta'_t}  + \beta'_t \right)}\right)  \prod_{s=n\e+1}^{t-1} 1_{F_s} = \Phi^{t-1}.
\end{align*}

The first inequality follows from \eqref{el3}; the second inequality follows from the fact that


$$x_t \in \argmin_{z}{ {y''_t}^{\top} A_{\sigma\left(t\right)}z  - f_{\sigma(t)}\left(z\right) };$$

\noindent the last inequality follows from the definition of $F_t$. Using the definition of $\phi_i^t$ and $\chi^t$ alongside Markov's inequality we can derive:

\def \chiB {\exp{\left(-\frac{\e^2}{7\gamma}\right)}} 
\def \phiB {\exp{\left(-\frac{\e^2}{4\gamma } \right)}}
\begin{align}\notag	
\mathbb{P}\left(\left\{\frac{1}{P^*}{\sum_{s=1}^{T} f_{\sigma(s)}(x_s)} < \left(1 - 3\e \right)(1-8\e)\right\} \cap F\right)&\leq \mathbb{P}\left(\left\{\sum_{s=n\epsilon+1}^{T} \frac{f_{\sigma(s)}(x_s)}{q_s} < \left(1 - 3\e \right)(1-8\e) \right\} \cap F\right)\\ \notag	
&=  \mathbb{P}\left(\left\{\chi^{T}> \exp{ \left( \sum_{s=n\epsilon+1}^{T} \beta'_s -\nu' \left(1 - 3\e \right)(1-8\e)  \right)}\right\}\cap F\right) \\\notag	
&\leq  \mathbb{P}\left(\chi^{T}1_F> \exp{ \left( \sum_{s=n\epsilon+1}^{T} \beta'_s -\nu' \left(1 - 3\e \right)(1-8\e)  \right)}\right) \\\notag	
&\leq E\left[\chi^{T}1_F\right] \exp{\left(\frac{1 - 3\e}{\gamma}  \left(\gamma \nu' (1-8\e) - \e (1-7\e)\right)\right)}\\\notag	
&\leq E\left[\chi^{T}1_F\right] \exp{\left(\frac{(1 - 3\e)(1-7\e)}{\gamma}  \left(\left(\frac{\e}{1-7\e}-1\right)\log{\left(1-\e\right)} - \e\right)\right)}\\ \label{P(B)}	
&\leq E\left[\chi^{T}1_F\right] \exp{\left(-\frac{ (1-3\e) (1-7\e) \e^2}{2\gamma}\right)}\leq  E\left[\chi^{T}1_F\right] \chiB,
\end{align}

\noindent where the last line follows from:

\begin{align}
&\forall u \in [0,1]:  \quad u + (1-  u)\log(1- u) \geq \frac{1}{2} u^2.
\end{align}

\begin{align}\notag
\mathbb{P}\left(\left\{\max_i  \frac{1}{b_i}{\sum_{s=1}^{T}\left(A_{\sigma(s)}x_s\right)_i} > \left(1-3\e \right)\left(1+ 3\epsilon\right)\right\} \cap F\right) 
&=  \mathbb{P}\left(\left\{\max_i \phi_i^{T} > \exp{\left( \nu \left(1-3 \e \right)\left(1+ 3\epsilon\right) - \sum_{s=n\e+1}^{T}   \beta_s  \right)}\right\} \cap F\right) \\\notag
&\leq  \mathbb{P}\left(\max_i \phi_i^{T}1_F > \exp{\left( \nu \left(1-3 \e \right)\left(1+ 3\epsilon\right) - \sum_{s=n\e+1}^{T}   \beta_s  \right)} \right) \\\notag
&\leq  \mathbb{P}\left(\sum_{i=1}^{m} \phi_i^{T} 1_F> \exp{\left(\nu\left(1-3\epsilon\right)\left(1+3\epsilon\right) - \sum_{s=n\e+1}^{T}{\beta_s}\right)}\right) \\\notag
&\leq  E\left[\sum_{i=1}^{m} \phi_i^{T} 1_F \right] \exp{\left(\frac{1-3\epsilon}{\gamma} \left(\e(1+2\e) -  \gamma \nu\left(1+3\epsilon\right) \right)\right)}\\\notag
&\leq  E\left[\sum_{i=1}^{m} \phi_i^{T} 1_F\right] \exp{\left(\frac{(1-3\e)(1+2\e)}{\gamma} \left(\e -  \left(1+\frac{\e}{1+2\e}\right)\log{\left(1+\e\right)} \right)\right)}\\  \label{P(C)}
&\leq  E\left[\sum_{i=1}^{m} \phi_i^{T}1_F \right]   \exp{\left(-\frac{(1-3\e)(1+2\e)\e^2}{3 \gamma \left(1+\e/3\right)} \right)} \leq  E\left[\sum_{i=1}^{m} \phi_i^{T}1_F \right] \phiB,
\end{align}

\noindent where the last line follows from \eqref{el1}.

\bibliographystyle{alpha}	
\bibliography{OnRef}

\end{document}